\numberwithin{equation}{section}
\newcommand{\todo}{\textcolor{red}{TODO}} 
\theoremstyle{plain}
\newtheorem{thm}{Theorem}[section]
\newtheorem{question}[thm]{Question}
\newtheorem{lem}[thm]{Lemma}
\newtheorem{prop}[thm]{Proposition}
\newtheorem{introthm}{Theorem}[section]
\newtheorem{introquestion}[introthm]{Question}
\theoremstyle{definition}
\newtheorem{dfn}[thm]{Definition}
\newtheorem{eg}[thm]{Example}
\newtheorem*{Ack}{Acknowledgement}
\newtheorem*{NoCon}{Notation and Conventions}
\newtheorem*{Out}{Outline of this paper}
\newtheorem*{RelFurth}{Related Work and Further Questions}
\theoremstyle{remark}
\newtheorem*{rem}{Remark}
\DeclareMathOperator{\Ext}{Ext}
\DeclareMathOperator{\ext}{ext}
\DeclareMathOperator{\Pic}{Pic}
\DeclareMathOperator{\Stab}{Stab}
\DeclareMathOperator{\Stabdiv}{Stab_{\mathrm{div}}}
\DeclareMathOperator{\Amp}{Amp}
\DeclareMathOperator{\Spec}{Spec}
\DeclareMathOperator{\ch}{ch}
\DeclareMathOperator{\rk}{rk}
\DeclareMathOperator{\ob}{ob}
\DeclareMathOperator{\id}{id}
\DeclareMathOperator{\pt}{pt}
\DeclareMathOperator{\GL}{GL}
\DeclareMathOperator{\Ku}{Ku}
\DeclareMathOperator{\Cone}{Cone}
\DeclareMathOperator{\Coh}{\mathbf{Coh}}
\DeclareMathOperator{\Qcoh}{\mathbf{Qcoh}}
\DeclareMathOperator{\depth}{depth}
\DeclareMathOperator{\pd}{pd}
\DeclareMathOperator{\gldim}{gl.dim}
\DeclareMathOperator{\hocolim}{hocolim}
\DeclareMathOperator{\thick}{thick}
\DeclareMathOperator{\Perf}{Perf}
\newcommand\Hom{\mathop{\mathrm{Hom}}\nolimits}
\newcommand\RHom{\mathop{\mathbf{R}\mathrm{Hom}}\nolimits}
\newcommand{\PP}[1]{\mathbb{P}^{#1}}
\newcommand\dual{\raise0.9ex\hbox{$\scriptscriptstyle\vee$}}
\newcommand{\hooklongrightarrow}{\lhook\joinrel\longrightarrow}
\newcommand{\Dcpt}{\DC^{\mathrm{cpt}}}
\newcommand{\CB}{\mathbb{C}}
\newcommand{\EB}{\mathbb{E}}
\newcommand{\HB}{\mathbb{H}}
\newcommand{\LB}{\mathbb{L}}
\newcommand{\RB}{\mathbb{R}}
\newcommand{\ZZ}{\mathbb{Z}}
\newcommand{\AC}{\mathcal{A}}
\newcommand{\BC}{\mathcal{B}}
\newcommand{\CC}{\mathcal{C}}
\newcommand{\DC}{\mathcal{D}}
\newcommand{\EC}{\mathcal{E}}
\newcommand{\FC}{\mathcal{F}}
\newcommand{\GC}{\mathcal{G}}
\newcommand{\HC}{\mathcal{H}}
\newcommand{\KC}{\mathcal{K}}
\newcommand{\TC}{\mathcal{T}}
\newcommand{\OO}{\mathscr{O}}
\newcommand{\Rbf}{\mathbf{R}}
\newcommand{\Lbf}{\mathbf{L}}
\author{Tomoki Yoshida}
\address{Department~of~Mathematics, School~of~Science~and~Engineering, Waseda~University, Ohkubo~3-4-1, Shinjuku, Tokyo~169-8555, Japan}
\email{\href{mailto:tomoki_y@asagi.waseda.jp}{tomoki\_y@asagi.waseda.jp}}
\title[Ulrich Objects]{A Simple Derived Categorical Generalization of Ulrich Bundles}
\date{September 14, 2025}
\keywords{Derived category, Ulrich bundles, Algebraic curves}
\subjclass[2020]{14F08 (primary), 14J60, 14D20 (secondary).}
\begin{document}
\begin{abstract}
We define special objects, \emph{Ulrich objects}, on a derived category of polarized smooth projective variety $(X, \OO(1))$ as a generalization of Ulrich bundles to the derived category.
These are defined by the cohomological conditions that are the same form as a cohomological criterion determining Ulrichness for sheaves.

This paper gives a characterization of the Ulrich object similar to the one in \cite{eisenbud_schreyer_2003_resultants_and_chow_forms_via_exterior_syzygies}.
As an application, we have provided a new approach to the Eisenbud-Schreyer question by using the notions of the generator of the derived category. 
We have also given an example of Ulrich objects that are not sheaves by the Yoneda extension.
\end{abstract}
\maketitle
\setcounter{tocdepth}{2} 
\tableofcontents
\setcounter{section}{-1}
\section{Introduction}\label{Section: introduction}
Let $X$ be a smooth projective variety over $k=\overline{k}$ and $D^b(X)$ be the bounded derived category of coherent sheaves on $X$.
A derived category, whose objects are bounded complexes of coherent sheaves, is a triangulated category constructed from an abelian category.
The investigation of $D^b(X)$ as an invariant of a variety began with the work of Bondal and Orlov \cite{bondal_orlov_2001_reconstruction_of_a_variety_from_the_derived_category_and_groups_of_autoequivalences}, where they showed that if $X$ has (anti-)ample canonical line bundle, then the equivalence $D^b(X)\cong D^b(Y)$ implies $X\cong Y$.

Since then, the derived category has been extensively studied in relation to the minimal model program \cite{kawamata_2002_dequivalence_and_kequivalence} and Bridgeland stability conditions \cite{bridgeland_2009_spaces_of_stability_conditions}, which is a generalized notion of sheaf stability.

Meanwhile, recent research has shown that the derived categories are also valuable to investigate the properties of sheaves.
For instance, derived categorical methods are employed in the works of Ohno \cite{ohno_2020_nef_vector_bundles_on_a_quadric_surface_with_the_first_chern_class_21,ohno_2022_nef_vector_bundles_on_a_projective_space_or_a_hyperquadric_with_the_first_chern_class_small} and Fukuoka, Hara, and Ishikawa \cite{fukuoka_hara_ishikawa_2022_classification_of_rank_two_weak_fano_bundles_on_del_pezzo_threefolds_of_degree_four,fukuoka_hara_ishikawa_2023_rank_two_weak_fano_bundles_on_del_pezzo_threefolds_of_degree_five} to classify certain classes of special bundles. 

This paper proposes that the derived categorical framework is likewise valuable for the study of Ulrich bundles, particularly regarding the existence problem, which has attracted the attention of algebraic geometers in recent years.

\subsection{Ulrich Bundles}\label{subsection in intro: Ulrich bundles}
Let $X\subset\PP{N}$ be a smooth projective variety embedded into the projective space $\PP{N}$. 
Let $\OO_{X}(1) \coloneqq \OO_{\PP{N}}(1)\otimes \OO_{X}$.
A sheaf $\EC$ on $X$ is called an \emph{Ulrich} sheaf if it is initialized, is arithmetically Cohen-Macaulay, and satisfies vanishing conditions of intermediate cohomologies with respect to $\OO_X(1)$ (see \cref{definition: Ulrich bundle}).

In \cite{ulrich_1984_Gorenstein_rings_and_modules_with_high_numbers_of_generators}, Ulrich introduced the notion of Ulrich bundles in the context of commutative algebra. Motivated by Beauville's results \cite{beauville_2000_determinantal_hypersurfaces}, Eisenbud and Schreyer \cite{eisenbud_schreyer_2003_resultants_and_chow_forms_via_exterior_syzygies} later brought this notion to algebraic geometry.
One of the primary issues regarding Ulrich bundles is their existence. 
Eisenbud and Schreyer posed the following question, which is now widely believed to have a positive answer and is known as Eisenbud–Schreyer conjecture.
\begin{introquestion}[\cite{eisenbud_schreyer_2003_resultants_and_chow_forms_via_exterior_syzygies}]\label{Question: the Eisenbud-Schreyer's conjecture}
    Is every smooth projective variety $X\subset\PP{N}$ the support of an Ulrich sheaf? 
\end{introquestion}

The existence problem has been studied for some varieties through ad hoc methods.
For example, the Ulrich bundles on the projective spaces are given by (direct sums of) structure sheaves, and these on the quadric hypersurface $Q^n\subset\PP{n+1}$ by Spinor bundle(s). 
For more general hypersurfaces, their existences are studied via matrix factorizations.
Additionally, the proof of existence often relies on the choice of a very ample divisor on $X$.

\subsection{Ulrich Objects}\label{subsection in intro: Ulrich objects}
In this paper, we introduce the notion of \emph{Ulrich objects} as a generalization of Ulrich bundles in terms of the derived category.
These objects are defined by cohomological conditions analogous to the cohomological criterion of Ulrichness for sheaves (see \cref{definition: Ulrich object}).
Firstly, we begin with establishing a characterization of Ulrich objects that is partially parallel to the characterization of Ulrich bundles (\cref{theorem: characterization of Ulrich bundle}).

Let $X$ be a smooth projective variety with an embedding $X\hookrightarrow \PP{N}$ by a linear system $|\OO_X(1)|$ and $\pi: X \to \PP{\dim X}$ a finite linear projection (\cref{definition: finite linear projection}). 

\begin{introthm}[see \cref{Proposition: Ulrich object characterization}]\label{Theorem in Introduction: Ulrich object characterization}
With the setting above, the following conditions are equivalent:
    \begin{enumerate}
        \item $E\in D^b(X)$ is an Ulrich object, 
        \item $\pi_{\ast}E\in \langle\OO\rangle$, 
        \item each cohomology sheaf $\HC^i(E)$ is an Ulrich bundle if it is not zero.
    \end{enumerate}
\end{introthm}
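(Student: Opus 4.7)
The plan is to work through the pushforward $\pi_{\ast}E$ on $\PP{n}$, where $n = \dim X$. Three inputs drive the argument: (a) finiteness and linearity of $\pi$, which give exactness of $R\pi_{\ast} = \pi_{\ast}$ together with $\pi^{\ast}\OO_{\PP{n}}(1) = \OO_X(1)$; (b) the Beilinson full exceptional collection $D^b(\PP{n}) = \langle \OO(-n),\ldots,\OO(-1),\OO\rangle$ together with Serre duality; and (c) the classical Eisenbud--Schreyer criterion for Ulrich bundles (\cref{theorem: characterization of Ulrich bundle}).

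For (1) $\Leftrightarrow$ (2), the projection formula delivers $H^i(X, E(m)) \cong H^i(\PP{n}, \pi_{\ast}E(m))$ for every $i$ and $m$, so the Ulrich vanishing for $E$ is equivalent to the same vanishing for $\pi_{\ast}E$, namely $H^i(\PP{n}, \pi_{\ast}E(-j)) = 0$ for all $i$ and $1 \le j \le n$. On the other hand, the right-most component $\langle \OO\rangle$ in the Beilinson SOD is the left orthogonal of $\langle \OO(-n),\ldots,\OO(-1)\rangle$, so an object $F \in D^b(\PP{n})$ lies in $\langle \OO\rangle$ iff $\RHom(F,\OO(-j)) = 0$ for $j = 1,\ldots,n$. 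Serre duality on $\PP{n}$, using $\omega_{\PP{n}} = \OO(-n-1)$, gives $\Ext^k(F, \OO(-j)) \cong H^{n-k}(\PP{n}, F(j-n-1))^{\vee}$, and reindexing $l = n+1-j$ converts this into $H^m(\PP{n}, F(-l)) = 0$ for all $m$ and $1 \le l \le n$. Applying this to $F = \pi_{\ast}E$ matches the two conditions.

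For (2) $\Leftrightarrow$ (3), the rigidity $\RHom(\OO,\OO) = k$ on $\PP{n}$ makes $\RHom(\OO,-)$ an equivalence $\langle \OO\rangle \simeq \Perf(k)$. Since every perfect complex over a field splits as a direct sum of shifts of $k$, every object of $\langle \OO\rangle$ is isomorphic to $\bigoplus_i \OO^{\oplus m_i}[-i]$; equivalently, $F \in \langle \OO\rangle$ iff each cohomology sheaf $\HC^i(F)$ is free on $\PP{n}$ (the converse direction uses $\Ext^{>0}(\OO,\OO)=0$ to split any such complex). Exactness of $\pi_{\ast}$ yields $\HC^i(\pi_{\ast}E) \cong \pi_{\ast}\HC^i(E)$, and by faithfulness of $\pi_{\ast}$ this sheaf vanishes exactly when $\HC^i(E)$ does. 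Invoking \cref{theorem: characterization of Ulrich bundle} then identifies freeness of $\pi_{\ast}\HC^i(E)$ with Ulrichness of the nonzero $\HC^i(E)$.

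The principal technical step is the cohomological description of $\langle \OO\rangle \subseteq D^b(\PP{n})$, combining the Beilinson SOD with Serre duality on the right-most component; once this is in hand, the remainder is largely formal, leveraging finiteness of $\pi$, rigidity of $\OO$ on $\PP{n}$, and the known sheaf-level characterization of Ulrich bundles.
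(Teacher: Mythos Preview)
Your argument is correct and shares the same skeleton as the paper's: push forward along $\pi$, identify $\langle\OO\rangle$ via the Beilinson decomposition, split objects of $\langle\OO\rangle$ as sums of shifts of $\OO$, and invoke \cref{theorem: characterization of Ulrich bundle}. The only noteworthy difference is in the description of $\langle\OO\rangle$. You place $\OO$ on the \emph{right} of the Beilinson collection $\langle\OO(-n),\ldots,\OO(-1),\OO\rangle$, read $\langle\OO\rangle$ as the \emph{left} orthogonal, and then need Serre duality to translate $\RHom(F,\OO(-j))=0$ into $H^{\ast}(F(-l))=0$. The paper instead uses the collection $\langle\OO,\OO(1),\ldots,\OO(n)\rangle$ with $\OO$ on the \emph{left}, so that $\langle\OO\rangle=\langle\OO(1),\ldots,\OO(n)\rangle^{\perp}$ and the membership condition is literally $\RHom(\OO(j),\pi_{\ast}E)=0$; by adjunction this is $H^{\ast}(E(-j))=0$ on the nose, with no Serre duality required. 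Your route works, but the paper's choice of twist eliminates one step. For the implication $(3)\Rightarrow(1)$ the paper closes the cycle directly with the hypercohomology spectral sequence $H^{p}(\HC^{q}(E)(-j))\Rightarrow H^{p+q}(E(-j))$, which is slightly quicker than your path $(3)\Rightarrow(2)\Rightarrow(1)$ via formality of complexes with trivial cohomology sheaves, though both are valid.
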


The last condition in the Theorem seems to mean that one does not get something new by this generalization.
However, the last condition in \cref{Theorem in Introduction: Ulrich object characterization} plays an important role in the study of the existence of Ulrich objects and bundles.
Namely, the existence of the Ulrich object is equivalent to the existence of the Ulrich bundle. 

Also, we present an example of an Ulrich object that is not an Ulrich bundle: \emph{a Yoneda type Ulrich object}.

\subsection{Application to the Eisenbud-Schreyer's Question}\label{subsection in intro: application}
As stated above, to establish the existence of Ulrich bundles on $(X, \OO(1))$, it suffices to show the existence of Ulrich objects.
In terms of the derived category, the existence of the Ulrich object is equivalent to the following claim:
\[
\GC' \coloneqq \OO_X(1)\oplus \cdots \oplus \OO_X(n) \text{ is not a generator}.
\]
In \cref{subsection: generators of the derived category}, we will review the definitions of various types of \emph{generators in triangulated categories}, as well as the relationship among them.

We present a new general proof of the existence of Ulrich bundles on an elliptic curve 
with an arbitrary embedding, based on the framework of the derived category.
By \cref{Theorem in Introduction: Ulrich object characterization}, known results, and a few discussions deduce the following corollary.

\begin{introthm}[see \cref{theorem: curve case existence of Ulrich object}]\label{Theorem in introduction: curve Ulrich object}
    For any elliptic curve with an embedding $(E, \OO_{E}(1))$, 
    there exists an Ulrich object. 
\end{introthm}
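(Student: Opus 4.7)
The plan is to combine \cref{Theorem in Introduction: Ulrich object characterization} with the reformulation recalled in \cref{subsection in intro: application}: the existence of an Ulrich object on $(X,\OO_X(1))$ is equivalent to the assertion that $\GC' = \OO_X(1)\oplus\cdots\oplus\OO_X(n)$ fails to be a classical generator of $D^b(X)$. For $X=E$ an elliptic curve one has $n=1$, so $\GC' = \OO_E(1)$, and the task reduces to producing a nonzero object $F\in D^b(E)$ with $\RHom(\OO_E(1),F)=0$.

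For this I would invoke the positive-dimensionality of $\Pic^0(E)$. Since $\Pic^0(E)\cong E$ is not a point, one can choose a line bundle $L\in\Pic^0(E)$ with $L\not\cong\OO_E$. Then $L$ and $L^{-1}$ are both nontrivial of degree zero, so $H^0(E,L)=0$; combining this with $K_E\cong\OO_E$ and Serre duality also gives $H^1(E,L)=H^0(E,L^{-1})^{\vee}=0$. Setting $F:=L\otimes\OO_E(1)$, the projection formula yields
\[
\RHom\bigl(\OO_E(1),\,F\bigr) \;=\; R\Gamma(E,L) \;=\; 0,
\]
so $F$ is a nonzero object right-orthogonal to $\OO_E(1)$, which shows that $\OO_E(1)$ is not a generator of $D^b(E)$. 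By the reformulation just recalled, this forces the existence of an Ulrich object on $(E,\OO_E(1))$.

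I do not foresee a serious obstacle: the only geometric input is the existence of a nontrivial element of $\Pic^0(E)$, which is immediate from $E$ having positive genus. The care needed is essentially bookkeeping — verifying that the ``not a generator'' condition of \cref{subsection in intro: application} really unpacks into the single homological vanishing $\RHom(\OO_E(1),F)=0$ exhibited above, and checking that the constructed $F$ is honestly nonzero in $D^b(E)$. The merit of this approach, as advertised in the introduction, is conceptual: the proof is uniform in the very ample line bundle $\OO_E(1)$ and sidesteps the embedding-dependent constructions (matrix factorizations, spinor-type bundles, explicit moduli of degree-$d$ line bundles) typically used for curves, reducing the entire existence problem to a one-line cohomological observation on $E$.
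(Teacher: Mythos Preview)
Your argument is correct. Two minor terminological slips: the reformulation in \cref{subsection in intro: application} is that $\GC'$ fails to be a \emph{weak} generator (i.e.\ $\langle\GC'\rangle^\perp\neq 0$), not a classical one---but since you go on to exhibit a nonzero element of $\langle\OO_E(1)\rangle^\perp$ this is exactly what you prove anyway; and the identification $\RHom(\OO_E(1),F)\cong R\Gamma(E,L)$ is just $\RHom(\LC,-)\cong R\Gamma(-\otimes\LC^{-1})$ for a line bundle $\LC$, not really the projection formula.

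Your route differs from the paper's. The object $F=L\otimes\OO_E(1)$ you produce is in fact an Ulrich \emph{line bundle}, so this is essentially the classical Eisenbud--Schreyer construction for curves (acknowledged as already known in the introduction), specialised to $g=1$ where the trivial canonical bundle makes the cohomology vanishing immediate. The paper, by contrast, never names a candidate object: it first proves (\cref{proposition: weak is classical for elliptic curve}, via Atiyah's classification of bundles on elliptic curves and a generation criterion of Ballard--Favero--Katzarkov) that on an elliptic curve every weak generator of $D^b(E)$ is already a classical generator, and then rules out classical generation by $\OO_E(1)$ with a K-theory rank count, $\rk K_0(\langle\OO_E(1)\rangle)=1<2=\rk K_0(D^b(E))$. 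Your approach is more elementary and constructive; the paper's is deliberately indirect, its point being to illustrate that the generator-theoretic framework can establish existence without exhibiting any Ulrich bundle at all.
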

Note that the existence of the Ulrich line bundle on a curve has already been established in \cite{eisenbud_schreyer_2009_betti_numbers_of_graded_modules_and_cohomology_of_vector_bundles}, 
see also \cite{coskun_2017_a_survey_of_ulrich_bundles} for a survey.

\begin{RelFurth}
The existence problem has been widely studied, and there are too many references to list them out here, but we mention some of them:

As stated above, the curve case treated in \cite{eisenbud_schreyer_2009_betti_numbers_of_graded_modules_and_cohomology_of_vector_bundles}.
The results for hypersurfaces and, more generally, complete intersections are studied in \cite{herzog_ulrich_backelin_1991_linear_maximal_cohenmacaulay_modules_over_strict_complete_intersections,brennan_herzog_ulrich_1987_maximally_generated_cohenmacaulay_modules}.
Also, Grassmannians are discussed in \cite{costa_miroroig_2015_glvinvariant_ulrich_bundles_on_grassmannians}.
\cite{feyzbakhsh_pertusi_2023_serreinvariant_stability_conditions_and_ulrich_bundles_on_cubic_threefolds} discusses the property of Ulrich bundles via the Bridgeland stability conditions.
\end{RelFurth}

\begin{Out}
\cref{section: towards the definition of Ulrich Objects} reviews the preliminary results on Ulrich bundles and some basic notions of the bounded derived category of coherent sheaves for clarifying the definition of the Ulrich object.
The Ulrich object will be introduced in \cref{section: Ulrich objects definition and first properties}. Also, \cref{section: Ulrich objects definition and first properties} contains the proof of \cref{Theorem in Introduction: Ulrich object characterization}.

The following \cref{section: examples of Ulrich object} discusses the example of Ulrich objects. In particular, we introduce a special type of Ulrich object, called a Yoneda-type Ulrich object.
The application of Ulrich object to \cref{Question: the Eisenbud-Schreyer's conjecture} is given in \cref{section: applications}. 
Especially, the proof of \cref{Theorem in introduction: curve Ulrich object} is in \cref{subsection: existence for curves}. 
\end{Out}

\begin{NoCon}
The conventions and notations used in this paper are listed below:
\begin{itemize}
    \item all varieties are smooth and projective defined over $k=\overline{k}$, 
    \item the dimension of the variety $X$ is written by $n$ unless otherwise stated,
    \item $\DC$ represents a triangulated category with arbitrary direct sums, 
    \item $\Dcpt$ is a full subcategory of $\DC$ that consists of compact objects, 
    \item for an abelian category $\AC$, $D^b(\AC)$ is the bounded derived category of $\AC$,
    \item the bounded derived category $D^b(\Coh(X))$ is denoted by $D^b(X)$ for short,
    \item $D(\Qcoh(X))$ is the unbounded derived category of quasi-coherent sheaves on the scheme $X$,
    \item $\Perf(X)$ is a triangulated category consisting of perfect complexes on $X$ (when $X$ is smooth, $D(\Qcoh(X))^{\textrm{cpt}} = \Perf(X) = D^b(X)$), and
    \item all functors are derived unless otherwise stated. However, $\Lbf$ and $\Rbf$ are occasionally used to emphasize that the functor is actually derived.
\end{itemize}

\end{NoCon}

\begin{Ack}
    The author is grateful to his advisor, Professor Yasunari Nagai, for his continuous encouragement and fruitful discussions.
    He would also like to thank Professor Yuki Mizuno for useful discussions and Professors Wahei Hara and Dmitri Pirozhkov for pointing out the mistake in Proposition 4.9 in the previous version of this paper.
    The author was supported by JST SPRING, Grant Number JPMJSP2128.
\end{Ack}
\section{Towards the Definition of Ulrich Object}\label{section: towards the definition of Ulrich Objects}
This \cref{section: towards the definition of Ulrich Objects} contains the preliminary results of Ulrich bundles and derived categories of coherent sheaves for non-experts of each topic.
Thus, there is no problem for experts to skip \cref{section: towards the definition of Ulrich Objects}.

\subsection{Preliminaries on Ulrich Bundles}\label{subsection: Preliminaries on Ulrich Bundles}
Most of the basic results of Ulrich bundles are cited from the book about Ulrich bundles \cite{book_costa_miroroig_2021_ulrich_bundlesfrom_commutative_algebra_to_algebraic_geometry}.
Also, \cite{beauville_2018_an_introduction_to_ulrich_bundles} is an article for an introduction to Ulrich bundles.
\begin{dfn}\label{definition: aCM bundle}
    A sheaf $\EC$ on $X$ is called \emph{arithmetically Cohen-Macaulay} (aCM for short) if
    locally Cohen-Macaulay (i.e. $\depth\EC_x = \dim\OO_{X, x}$ for any $x\in X$) and
    $H^i(X, \EC(t)) = 0$ for any $t\in\ZZ$ and $i= 1, \dots, n-1$.
\end{dfn}
\begin{dfn}\label{definition: initialized bundle}
    A sheaf $\EC$ on $X$ is called \emph{initialized} if 
    $H^0(X, \EC) \neq 0$ and $H^0(X, \EC(t)) = 0$ for any $t<0$.
\end{dfn}
\begin{dfn}\label{definition: Ulrich bundle}
    A sheaf $\EC$ on $X$ is called \emph{Ulrich} if
    it is initialized aCM and $h^0(X, \EC) = \deg X\rk\EC$. 
\end{dfn}
\begin{rem}
    If $\EC$ is initialized and aCM, the inequality holds;
    \[
    h^0(\EC) \le \deg(X)\rk\EC.
    \]
\end{rem}
\begin{rem}
    Any aCM bundle is a vector bundle.
    Indeed, by the Auslander-Buchsbaum formula, 
    \[
    \pd\EC_x = \depth\OO_{X, x} -\depth\EC_x =0.
    \]
    Thus, $\EC_x$ is $\OO_{X, x}$- free module for any $x\in X$, 
    noting that the smoothness of the variety is used here.
    Thus, Ulrich sheaves are sometimes referred to as Ulrich bundles. 
\end{rem}
\begin{dfn}\label{definition: finite linear projection}
    For an polarized variety $(X, \OO(1))$, 
    the morphism $\pi: X\to \PP{n}$ as following diagram:
    \[
\begin{tikzcd}
X \arrow[rr, "\Phi_{|\OO(1)|}", hook] \arrow[rrd, "\pi"] &  & \PP{N} \arrow[d, "p", dashed] \\
& &\PP{n},      
\end{tikzcd}
\]
where the $p$ is the composition of linear projection from the points in $\PP{N}\setminus X$.
The composition $\pi\coloneqq p\circ\Phi_{|\OO(1)|}$ that is defined above is called \emph{finite linear projection}.
\end{dfn}
\begin{rem}
    The finite linear projection is a finite morphism.
    Indeed, as the linear projections are quasi-finite and proper, this is a finite morphism.
\end{rem}
The motivation for considering the Ulrich bundles goes back to the paper \cite{beauville_2000_determinantal_hypersurfaces}.
Let $X\subset\PP{n+1}$ be a hypersurface of degree $d$, defined by the equation $F=0$.
Then, Beauville showed that 
$X$ behaves like a determinantal variety in the meaning of  $F^r = \det M$ for some $r$ and $M$
if and only if 
there exists a rank $r$ vector bundle $\EC$ on $X$ that admits 
a linear resolution 
\begin{equation}
    0 \to \OO_{\PP{n+1}}(-1)^{\oplus rd} \stackrel{M}{\longrightarrow} \OO_{\PP{n+1}}^{\oplus rd} \to \EC \to 0.
    \notag
\end{equation}
Generalizing this result, Eisenbud and Schreyer have shown that the following result: 
\begin{thm}[{\cite[Proposition 2.1.]{eisenbud_schreyer_2003_resultants_and_chow_forms_via_exterior_syzygies}}]\label{theorem: characterization of Ulrich bundle}
    Let $\EC$ be an initialized vector bundle on $n$-dimensional variety $X$ and an embedding $\Phi_{|\OO_X(1)|} : X\hookrightarrow\PP{N}$.
    The following statements are equivalent:
    \begin{enumerate}
        \item $\EC$ is an Ulrich bundle,
        \item There is a linear resolution
        \begin{equation}
            0 \to \OO_{\PP{N}}(-N+n)^{\oplus a_{N-n}} \to \cdots \to \OO_{\PP{N}}(-1)^{\oplus a_{1}} \to \OO_{\PP{N}}^{\oplus a_{0}} \to \EC \to 0
            \notag
        \end{equation}
        \item $H^i(\EC(-j)) = 0$ for any $i\ge 0$ and $j= 1, \dots, n$, 
        \item For some finite linear projection $\pi : X\to \PP{n}$, 
        $\pi_{\ast}\EC\cong\OO^{\oplus m}_{\PP{n}}$ for some $m$.
    \end{enumerate}
\end{thm}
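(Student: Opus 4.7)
The plan is to establish the equivalences by running the cycle $(1) \Rightarrow (3) \Rightarrow (4) \Rightarrow (1)$ among the first, third and fourth conditions, and then to treat $(2) \Leftrightarrow (3)$ as a separate loop via the Beilinson monad on $\PP{N}$.

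For $(1) \Rightarrow (3)$, I would split the required vanishings $H^i(\EC(-j)) = 0$ for $j = 1, \dots, n$ by range of $i$. The case $i = 0$ is the initialized condition (\cref{definition: initialized bundle}), and the range $1 \leq i \leq n-1$ is the aCM assumption (\cref{definition: aCM bundle}). The nontrivial case is $i = n$: here I would first pin down the Hilbert polynomial of an Ulrich bundle, showing it takes the form $\chi(\EC(t)) = \deg(X) \cdot \rk(\EC) \cdot \binom{t+n}{n}$, which uses $h^0(\EC) = \deg(X)\rk(\EC)$ together with the other Ulrich conditions. In particular $\chi(\EC(-j)) = 0$ for $j = 1, \dots, n$, and combining with the vanishings already proved forces $h^n(\EC(-j)) = 0$ as the only remaining term in the alternating sum.

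For $(3) \Rightarrow (4)$, the key input is that the finite linear projection $\pi$ satisfies $\pi^*\OO_{\PP{n}}(1) \cong \OO_X(1)$ and $\Rbf\pi_* = \pi_*$. The projection formula then yields
\[
H^i\bigl(X, \EC(-j)\bigr) \cong H^i\bigl(\PP{n}, \pi_*\EC(-j)\bigr),
\]
so (3) translates to $H^i(\pi_*\EC(-j)) = 0$ for every $i \ge 0$ and $j = 1, \dots, n$. This is precisely Castelnuovo--Mumford $0$-regularity of $\pi_*\EC$ on $\PP{n}$. Feeding this into the Beilinson spectral sequence on $\PP{n}$ (whose $E_1$ page is controlled by the groups $H^q(\pi_*\EC(-p))$ for $0 \leq p \leq n$), every column dies except the $p=q=0$ one, collapsing to $\pi_*\EC \cong \OO_{\PP{n}}^{\oplus m}$ with $m = h^0(\pi_*\EC)$.

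The implication $(4) \Rightarrow (1)$ is then formal: the identifications $H^i(\EC(t)) = H^i(\PP{n}, \OO(t))^{\oplus m}$ immediately give initializedness and aCMness, while the rank identity $m = \rk(\pi_*\EC) = \deg(X) \cdot \rk(\EC)$ produces the Ulrich count $h^0(\EC) = m$. For the auxiliary loop $(2) \Leftrightarrow (3)$, I would view $\EC$ as a sheaf on $\PP{N}$ through the embedding and run the Beilinson spectral sequence there: its $E_1$ page is built from the $H^q(\EC(-p))$ for $0 \le p \le N$, so (3) makes every column except the one producing the linear resolution vanish; conversely, given a linear resolution, one can read off the cohomology of $\EC(-j)$ directly from the cohomology of twists of $\OO_{\PP{N}}$. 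The main obstacle I expect is executing the Beilinson/Castelnuovo--Mumford step cleanly in $(3) \Rightarrow (4)$: one must confirm that the narrow range of twists $-1, \dots, -n$ is exactly enough to collapse the spectral sequence onto a single trivial summand. The $i=n$ sub-case of $(1) \Rightarrow (3)$ is the other place requiring care, but it reduces to a standard Hilbert-polynomial computation once aCM and initializedness have pinned down most of the polynomial.
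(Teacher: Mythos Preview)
The paper does not give its own proof of this theorem: it is quoted in the preliminaries as \cite[Proposition~2.1]{eisenbud_schreyer_2003_resultants_and_chow_forms_via_exterior_syzygies}, with no argument supplied. So there is nothing in the paper to compare your proposal against; the authors simply import the result from Eisenbud--Schreyer.

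On the merits of your outline itself, the steps $(3)\Rightarrow(4)$ and $(4)\Rightarrow(1)$ are fine and are the standard ones. The step $(1)\Rightarrow(3)$, however, has a real gap in the case $i=n$. You claim that the Ulrich conditions pin down the Hilbert polynomial as $\chi(\EC(t))=\deg(X)\rk(\EC)\binom{t+n}{n}$, but from \cref{definition: Ulrich bundle} alone you do not know a single value of $\chi(\EC(t))$: for every $t$ you have $\chi(\EC(t))=h^0(\EC(t))+(-1)^n h^n(\EC(t))$, and the top term $h^n$ is exactly what you are trying to control. Knowing $h^0(\EC)=\deg(X)\rk(\EC)$ does not give $\chi(\EC)$ unless you already know $h^n(\EC)=0$. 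The usual fix is to bypass $(3)$ and prove $(1)\Rightarrow(4)$ directly: since $\pi$ is finite, $\pi_\ast\EC$ is a vector bundle on $\PP{n}$ with no intermediate cohomology (from aCM), so by Horrocks $\pi_\ast\EC\cong\bigoplus_i\OO_{\PP{n}}(a_i)$; initializedness forces all $a_i\le 0$, and then $h^0(\pi_\ast\EC)=\#\{i:a_i=0\}$ while $\rk(\pi_\ast\EC)=\deg(X)\rk(\EC)$, so the Ulrich equality $h^0(\EC)=\deg(X)\rk(\EC)$ forces every $a_i=0$. From $(4)$ both $(3)$ and the Hilbert polynomial formula follow immediately. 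Your sketch of $(2)\Leftrightarrow(3)$ via Beilinson on $\PP{N}$ is also too quick as written---condition $(3)$ only controls the twists $-1,\dots,-n$, not $-n-1,\dots,-N$---but once $(4)$ is in hand you know \emph{all} $H^i(\EC(t))$, and then the Beilinson monad on $\PP{N}$ (or a direct regularity argument) does produce the linear resolution.
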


The rest of this subsection is devoted to reviewing the stability of Ulrich bundles.

\begin{thm}[{\cite[Theorem 2.9.]{casanellas_hartshorne_geiss_2012_stable_ulrich_bundles}}]\label{Theorem: Ulrih bundles are Gieseker stable}
    Let $(X, \OO(1))$ be a $n$-dimensional variety and 
    $E$ be an Ulrich bundle on $X$.
    Then, $E$ is semistable in the sense of Gieseker 
    (In particular, $\mu$-semistable) with respect to $\OO(1)$.
\end{thm}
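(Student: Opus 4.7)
The plan is to exploit characterization (4) of Ulrich bundles in \cref{theorem: characterization of Ulrich bundle}: there exists a finite linear projection $\pi \colon X \to \PP{n}$ with $\pi_{\ast} E \cong \OO_{\PP{n}}^{\oplus m}$ for some $m$. Taking generic ranks (and using $\deg \pi = \deg X$) forces $m = \deg X \cdot \rk E$. Since $\OO_X(1) \cong \pi^{\ast} \OO_{\PP{n}}(1)$ and $\pi$ is finite, the projection formula gives $\chi(E(t)) = \chi(\pi_{\ast} E \otimes \OO_{\PP{n}}(t)) = m \binom{t+n}{n}$, so the reduced Hilbert polynomial of $E$ is
\[
p_E(t) = \frac{\chi(E(t))}{\rk E} = \deg X \cdot \binom{t+n}{n}.
\]

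Next I would take an arbitrary nonzero coherent subsheaf $F \subset E$. Because $\pi$ is affine, $\pi_{\ast}$ is exact and produces an inclusion $\pi_{\ast} F \hookrightarrow \OO_{\PP{n}}^{\oplus m}$. The sheaf $F$ is torsion-free as a subsheaf of a vector bundle, whence $\pi_{\ast} F$ is a torsion-free $\OO_{\PP{n}}$-module of generic rank $\deg X \cdot \rk F$, and the projection formula again gives $\chi(F(t)) = \chi(\pi_{\ast} F \otimes \OO_{\PP{n}}(t))$. The key input is the Gieseker semistability of $\OO_{\PP{n}}^{\oplus m}$, which holds because it is a direct sum of copies of the Gieseker stable line bundle $\OO_{\PP{n}}$, all with the same reduced Hilbert polynomial $\binom{t+n}{n}$. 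Hence every subsheaf $G \subset \OO_{\PP{n}}^{\oplus m}$ satisfies $\chi(G(t))/\rk G \le \binom{t+n}{n}$ for $t \gg 0$. Applying this to $G = \pi_{\ast} F$,
\[
p_F(t) = \deg X \cdot \frac{\chi(\pi_{\ast} F \otimes \OO_{\PP{n}}(t))}{\rk \pi_{\ast} F} \le \deg X \cdot \binom{t+n}{n} = p_E(t)
\]
for $t \gg 0$, which establishes Gieseker semistability. The parenthetical $\mu$-semistability is then automatic, as Gieseker semistability implies slope semistability.

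The main obstacle I expect is simply the careful bookkeeping: the identity $\rk \pi_{\ast} F = \deg X \cdot \rk F$ for a finite surjection of degree $\deg X$, and the standard but mildly non-trivial fact that a torsion-free subsheaf of a direct sum of a Gieseker stable bundle on $\PP{n}$ has reduced Hilbert polynomial bounded above by that of the ambient bundle. Once these ingredients are assembled, the proof is essentially a one-step reduction through characterization (4) of \cref{theorem: characterization of Ulrich bundle}, pushing the semistability problem on $X$ down to the trivial one on $\PP{n}$.
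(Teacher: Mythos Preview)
The paper does not supply its own proof of this statement; it is quoted as \cite[Theorem 2.9]{casanellas_hartshorne_geiss_2012_stable_ulrich_bundles} in the preliminaries without argument. Your proposal is correct and is in fact essentially the proof given in that reference (and in Beauville's survey \cite{beauville_2018_an_introduction_to_ulrich_bundles}): push the problem down through a finite linear projection $\pi$ using characterization (4) of \cref{theorem: characterization of Ulrich bundle}, and invoke the trivial Gieseker semistability of $\OO_{\PP{n}}^{\oplus m}$. The bookkeeping points you flag (that $\rk \pi_{\ast} F = \deg X \cdot \rk F$ for a torsion-free $F$ under a finite surjection, and that normalization by rank is the correct comparison for torsion-free subsheaves) are all standard and go through without issue.
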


\begin{prop}
    Let 
    \[
    0=\EC_0\subset \cdots \subset \EC_{n-1}\subset\EC_{n}=\EC
    \]
    be a Jordan-H\"older filtration of Ulrich bundle $\EC$.
    Then, each sub-sheaf $\EC_{i}$ is also an Ulrich bundle. 
\end{prop}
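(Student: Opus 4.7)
The proof proceeds by induction on the length $\ell$ of the Jordan--Hölder filtration, with the inductive engine being a \emph{two-out-of-three property} for Ulrich bundles: in any short exact sequence $0 \to A \to B \to C \to 0$ of coherent sheaves whose Hilbert polynomials are proportional to $\deg(X)\binom{t+n}{n}$, if any two of $A, B, C$ are Ulrich then so is the third. This is a direct consequence of the long exact sequence in cohomology twisted by $\OO(-j)$ for $j = 1, \dots, n$, combined with $H^n(B) = 0$ for Ulrich $B$ (which is forced by $h^0(B) = \deg(X)\rk(B)$ together with aCM-ness, so that $\chi(B) = h^0(B) + (-1)^n h^n(B)$ equals $\rk(B)\deg(X)$).

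Granted this, once $\EC_1$ is shown to be Ulrich the quotient $\EC/\EC_1$ is Ulrich by two-out-of-three; its Jordan--Hölder filtration has length $\ell - 1$, so by induction each $\EC_i/\EC_1$ is Ulrich; then $\EC_i$ is Ulrich as an extension of Ulrich bundles. So the essential step is to prove that $\EC_1$ is Ulrich. By \cref{Theorem: Ulrih bundles are Gieseker stable}, $\EC_1$ is Gieseker stable with the same reduced Hilbert polynomial $\deg(X)\binom{t+n}{n}$ as $\EC$, so $\chi(\EC_1(-j)) = 0$ for $j = 1, \dots, n$. The inclusion $\EC_1 \hookrightarrow \EC$ yields $H^0(\EC_1(-j)) = 0$ immediately, and Serre duality applied to the Ulrich dual $\EC^{\vee} \otimes \omega_X(n+1)$ --- itself Ulrich, by a direct manipulation of criterion (3) of \cref{theorem: characterization of Ulrich bundle} --- gives $H^n(\EC_1(-j)) = 0$.

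The main obstacle is the vanishing of the middle cohomologies $H^k(\EC_1(-j))$ for $1 \le k \le n - 1$. The long exact sequence attached to $0 \to \EC_1 \to \EC \to \EC/\EC_1 \to 0$ together with Ulrichness of $\EC$ only delivers the shifting isomorphisms $H^k(\EC_1(-j)) \cong H^{k-1}((\EC/\EC_1)(-j))$, which by themselves cannot kill either side. To close the argument I would translate the question to $\PP{n}$ via characterization (4) of \cref{theorem: characterization of Ulrich bundle}: $\EC_1$ is Ulrich if and only if $\pi_{\ast}\EC_1 \cong \OO_{\PP{n}}^{\oplus m_1}$ for a finite linear projection $\pi \colon X \to \PP{n}$. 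Here $\pi_{\ast}\EC_1$ is a torsion-free subsheaf of $\pi_{\ast}\EC \cong \OO_{\PP{n}}^{\oplus m}$ whose Hilbert polynomial matches that of $\OO_{\PP{n}}^{\oplus m_1}$, where $m_1 = \rk(\EC_1)\deg(X)$, forced by Gieseker stability. By Horrocks' splitting theorem on $\PP{n}$ it then suffices to verify aCM-ness of $\pi_{\ast}\EC_1$, which I expect to handle by iteratively exploiting Gieseker semistability of the remaining Jordan--Hölder factors to absorb any middle cohomology that might appear.
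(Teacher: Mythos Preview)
The paper does not supply a proof of this proposition: it is listed among the preliminary facts on Ulrich bundles quoted from the literature (the surrounding material cites \cite{casanellas_hartshorne_geiss_2012_stable_ulrich_bundles} and \cite{book_costa_miroroig_2021_ulrich_bundlesfrom_commutative_algebra_to_algebraic_geometry}), so there is no in-paper argument to compare against.

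Assessing your proposal on its own merits, the reduction is sound --- the induction, the two-out-of-three property, the identification of ``$\EC_1$ is Ulrich'' as the crux, and the passage to $\PP{n}$ via $\pi_{\ast}$ are all correct --- but the final paragraph contains a genuine gap. The sentence ``it then suffices to verify aCM-ness of $\pi_{\ast}\EC_1$, which I expect to handle by iteratively exploiting Gieseker semistability \dots'' is not an argument. Since $\pi$ is finite, the vanishing of intermediate cohomology for $\pi_{\ast}\EC_1$ is \emph{equivalent} to the vanishing of intermediate cohomology for $\EC_1$, which is exactly the obstacle you flagged two sentences earlier; invoking Horrocks at this point is circular. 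There is also a secondary problem: Horrocks' criterion applies to locally free sheaves, whereas a priori $\pi_{\ast}\EC_1$ is only a torsion-free subsheaf of $\OO_{\PP{n}}^{\oplus m}$.

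The missing idea, once you are on $\PP{n}$, avoids Horrocks entirely. You already know that $\pi_{\ast}\EC_1 \subset \pi_{\ast}\EC \cong \OO_{\PP{n}}^{\oplus m}$ has the same reduced Hilbert polynomial as $\OO_{\PP{n}}$ (the Jordan--H\"older condition forces $p_{\EC_1} = p_{\EC}$, and $\pi_{\ast}$ preserves Hilbert polynomials), and that the quotient $\pi_{\ast}(\EC/\EC_1)$ is torsion-free, so $\pi_{\ast}\EC_1$ is saturated. Now simply observe that $\OO_{\PP{n}}^{\oplus m}$ is \emph{polystable}: in the abelian category of Gieseker-semistable sheaves with reduced Hilbert polynomial $\binom{t+n}{n}$ it is a direct sum of copies of the simple object $\OO_{\PP{n}}$, hence semisimple. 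Any subobject of a semisimple object is a direct summand, so $\pi_{\ast}\EC_1 \cong \OO_{\PP{n}}^{\oplus m_1}$ with $m_1 = \rk(\EC_1)\deg(X)$, and $\EC_1$ is Ulrich by criterion~(4) of \cref{theorem: characterization of Ulrich bundle}. No cohomology chase is required.
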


\subsection{Concise reviews on Derived Categories}\label{subsection: concise reviews on derived categories}
In \cref{subsection: concise reviews on derived categories}, we shortly give reviews on some definitions and notations of derived categories of coherent sheaves for non-experts of derived categories.
The derived category of an abelian category has the structure of a triangulated category.
Some of the following results can be applied to triangulated categories. 
However, we will only mention the result for derived categories for simplicity.

Let $X$ be a smooth projective variety of dimension $n$ over $\CB$, and 
$D^b(X)\coloneqq D^b(\Coh(X))$ be the bounded derived category of coherent sheaves.
The object $E\in D^b(X)$ is, as an entity, the complex of quasi-coherent sheaves on $X$ with coherent cohomologies like:

\begin{equation}
    E: \big(\cdots \to E_{-1} \to E_0 \to E_1 \to E_2 \to \cdots\big). \notag
\end{equation}
The $i$-th cohomology sheaf of $E$ as a complex is written as $\HC^i(E)\in\Coh(X)$ and 
$H^i(E) \coloneqq \HC^i(\Rbf\Gamma(E))\in\mathbf{Vec}$, here $\Rbf\Gamma: D^b(X) \to D^b(\mathbf{Vec})$ is the derived functor of global section functor.

\begin{dfn}\label{definition: thick closure and thick subcategory}
    Let $\CC\subset\DC$ be a full subcategory or a set of objects. 
    \begin{enumerate}
        \item The \emph{thick closure} of $\CC$ in $\DC$ is the smallest full triangulated subcategory containing $\CC$ which is closed under taking direct summands.
        Denote this by $\thick\CC$.
        \item The full subcategory $\CC$ is called \emph{thick} if $\CC = \thick\CC$ holds.
        Denote the smallest thick subcategory containing $\CC$ by $\langle \CC \rangle$.
    \end{enumerate}
\end{dfn}

\begin{dfn}\label{definition: Semi-orthogonal decommposition}
		Let $\DC$ be a triangulated category. 
		A sequence of full subcategories $\{\mathcal{C}_{1},\ldots, \mathcal{C}_{l}\}$ forms
		a \emph{semi-orthogonal decomposition} of $\DC$ if the following two conditions hold: 
	\begin{enumerate}[(1)]
		\item $\Hom(\CC_{m}, \CC_{n}) = 0$ for $m>n$, 
		\item for any object $D\in\DC$, there is a sequence
		\[
            0=D_{l}\rightarrow D_{l-1}\rightarrow\cdots\rightarrow D_{1}\rightarrow D_{0}=  D, 
        \]
			such that $\Cone(D_{i}\rightarrow D_{i-1})\in\CC_{i}$.
	\end{enumerate}
	Denote the semi-orthogonal decomposition by 
    $\DC = \langle\CC_{1},\ldots,\CC_{l}\rangle$ (Indeed, in this case, the thick closure of $\CC_{1},\ldots,\CC_{l}$ coincides with $\DC$). 
\end{dfn}

\begin{dfn}
    A full triangulated subcategory $\CC\subset D^b(X)$ is called \emph{admissible} if
    the natural inclusion functor $\iota_{\ast}: D^b(X) \to D^b(X)$ admits 
    both of adjoints $\iota^{\ast} \dashv \iota \dashv \iota^{R}$.
\end{dfn}
Note that for a triangulated category with the Serre functor, any subcategory that can be a component of semi-orthogonal decomposition of $\DC$ is admissible.
Conversely, for a admissible subcategory $\CC\subset\DC$, there are the semi-orthogonal decompositions
\[
\DC = \langle\AC, \CC\rangle =\langle\CC, \BC\rangle.
\]
Next, we introduce the \emph{(full) exceptional collection}, which is a most simple form of semi-orthogonal decomposition.
\begin{dfn}\label{definition: exceptional objects}
    Let $\DC$ be a triangulated category and $E$ an object in $\DC$. 
    $E$ is called \emph{exceptional} if the following condition holds:
    \begin{equation*}
        \hom^{i}(E, E) = 
        \begin{cases}
            1 & (i=0), \\
            0 & (i\neq0).
        \end{cases}
    \end{equation*}
\end{dfn}

\begin{dfn}\label{Definition: exceptional collection}
    The ordered collection of exceptional objects $(E_{1},\ldots, E_{n}) \subset \DC$
    is called an \emph{exceptional collection} if 
    for any $i,j\in\{1,\dots ,n\}$ with $i<j$ and for any $k\in\ZZ$, 
    $$\Hom^{k}(E_{i}, E_j) = 0. $$

    An exceptional collection $\mathbb{E} = (E_{1},\ldots, E_{n})$ is called 
    \begin{enumerate}
        \item \emph{full} if
        $\EB$ generates the derived category by shifts and extensions;
        \item \emph{$\Ext$-exceptional} if
        $\Hom^{k}(E_i, E_j) = 0$ for all $i\neq j$, when $k\le0$;
        \item \emph{strong} if 
        $\Hom^{k}(E_i, E_j) = 0$ for all $i$ and $j$, when $k\neq 0$.
    \end{enumerate}
\end{dfn}

\begin{eg}\label{Example: Beilinson collection and left dual}
    The projective space $\PP{n}$ has a strong full exceptional collection 
    \[
        D^{b}(X) = \EB \coloneqq (\OO, \OO(1), \dots, \OO(n)), 
    \]
    that is called \emph{Beilinson Collection}.
\end{eg}

\section{Ulrich Objects: Definition and First Properties}\label{section: Ulrich objects definition and first properties}

In this \cref{section: Ulrich objects definition and first properties}, 
we generalize the notion of Ulrich bundles to the derived categorical setting by the characterization of Ulrich bundles \cref{theorem: characterization of Ulrich bundle}.
We refer to the basic results about the derived categories to \cite{book_huybrechts_2006_fouriermukai_transforms_in_algebraic_geometry}.
It will start with the definition of Ulrich objects.
\begin{dfn}\label{definition: Ulrich object}
    Let $X$ be an algebraic variety of dimension $n$, $\OO_{X}(1)$ be a very ample divisor on $X$, 
    and $E$ be an object of the bounded derived category $D^b(X)$.
    
    $E$ is said to be an \emph{Ulrich object} with respect to the polarization $\OO_{X}(1)$ if 
    \begin{equation}
       H^i(E(-j))=0 \notag 
    \end{equation}
    for any $i\in \ZZ$ and $1\le j\le n$.
\end{dfn}
    Note that the Ulrich bundle embedded into $D^b(X)$ as a complex concentrated in a term is an Ulrich object.

\begin{prop}\label{Proposition: Ulrich object characterization}
    Let $E\in D^b(X)$ be an Ulrich object on a smooth projective variety with an embedding $(X, \OO_X(1))$ and 
    $\pi: X\to \PP{n}$ a finite linear projection (see \cref{definition: finite linear projection}).
    Then, the following conditions are equivalent:
    \begin{enumerate}
        \item $E\in D^b(X)$ is an Ulrich object, 
        \item $\pi_{\ast}E\in \langle\OO_{\PP{n}}\rangle$, 
        \item each cohomological sheaf $\HC^i(E)$ is an Ulrich bundle (if not zero).
    \end{enumerate}
\end{prop}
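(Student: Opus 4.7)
My plan is to prove the three conditions equivalent by the cycle $(1)\Rightarrow(2)\Rightarrow(3)\Rightarrow(1)$, leaning on the Beilinson semi-orthogonal decomposition of $D^{b}(\PP{n})$ and the projection formula along the finite linear projection $\pi$.

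For $(1)\Leftrightarrow(2)$, I first observe that $\pi$ is finite, so $R\pi_{*}=\pi_{*}$ is exact, and that $\OO_{X}(-j)\cong\pi^{*}\OO_{\PP{n}}(-j)$ by construction of a finite linear projection. The projection formula then rewrites the Ulrich cohomologies as
\[
H^{i}(X,E(-j))\cong H^{i}(\PP{n},\pi_{*}E\otimes\OO_{\PP{n}}(-j))\cong \Ext^{i}_{\PP{n}}(\OO(j),\pi_{*}E).
\]
Hence condition (1) is equivalent to $\RHom(\OO_{\PP{n}}(j),\pi_{*}E)=0$ for $j=1,\dots,n$. In the Beilinson decomposition $D^{b}(\PP{n})=\langle\OO,\OO(1),\dots,\OO(n)\rangle$ of \cref{Example: Beilinson collection and left dual}, this vanishing against the generators of the last $n$ components is exactly what pins $\pi_{*}E$ down to the left-most component $\langle\OO_{\PP{n}}\rangle$ in the sense of \cref{definition: thick closure and thick subcategory}.

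For $(2)\Rightarrow(3)$ I use that $\OO_{\PP{n}}$ is exceptional, so $\Ext^{>0}(\OO,\OO)=0$ and a Postnikov-tower argument forces every object of $\langle\OO_{\PP{n}}\rangle$ to be formal: $\pi_{*}E\cong\bigoplus_{i}\OO_{\PP{n}}^{\oplus m_{i}}[-i]$. Applying $\HC^{i}$ and using the exactness of $\pi_{*}$ gives $\pi_{*}\HC^{i}(E)\cong\OO_{\PP{n}}^{\oplus m_{i}}$. I then promote this to Ulrichness of $\HC^{i}(E)$ via criterion (4) of \cref{theorem: characterization of Ulrich bundle}, which a priori requires $\HC^{i}(E)$ to be an initialized vector bundle. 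Local freeness I would extract from miracle flatness (so $\pi$ is finite and flat between smooth varieties of equal dimension): at $x\in X$ with $y=\pi(x)$, the stalk $\HC^{i}(E)_{x}$ is a direct summand of the free $\OO_{\PP{n},y}$-module $(\pi_{*}\HC^{i}(E))_{y}$, hence of depth $n$ over $\OO_{X,x}$, hence free by Auslander-Buchsbaum. Initialization is immediate from $H^{0}(\HC^{i}(E)(t))\cong H^{0}(\OO_{\PP{n}}^{m_{i}}(t))$.

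Finally $(3)\Rightarrow(1)$ follows from the hypercohomology spectral sequence
\[
E_{2}^{p,q}=H^{p}(X,\HC^{q}(E)(-j))\Longrightarrow H^{p+q}(X,E(-j)).
\]
Each nonzero $\HC^{q}(E)$ is Ulrich by hypothesis, so criterion (3) of \cref{theorem: characterization of Ulrich bundle} makes the $E_{2}$-page vanish identically for $1\le j\le n$, and hence so does the abutment. The step I expect to be the most delicate is $(2)\Rightarrow(3)$: the formality splitting of $\pi_{*}E$ and the upgrade from $\pi_{*}\HC^{i}(E)\cong\OO^{m_{i}}$ to local freeness of $\HC^{i}(E)$ itself rely on specific features (exceptionality of $\OO_{\PP{n}}$ and miracle flatness of $\pi$) rather than on any one-line invocation, and they are the only places where smoothness of $X$ genuinely enters the argument.
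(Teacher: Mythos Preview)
Your argument follows the same cycle $(1)\Rightarrow(2)\Rightarrow(3)\Rightarrow(1)$ as the paper, with the same ingredients at each step: adjunction and the Beilinson decomposition for $(1)\Rightarrow(2)$, the equivalence $\langle\OO_{\PP{n}}\rangle\simeq D^b(\Spec k)$ to split $\pi_{*}E$ for $(2)\Rightarrow(3)$, and the hypercohomology spectral sequence for $(3)\Rightarrow(1)$. You are in fact more careful than the paper in checking that $\HC^{i}(E)$ is a locally free initialized sheaf before invoking \cref{theorem: characterization of Ulrich bundle}; the only slip is that $\HC^{i}(E)_{x}$ is not literally a direct summand of $(\pi_{*}\HC^{i}(E))_{y}$ (the semilocal ring $(\pi_{*}\OO_{X})_{y}$ is a domain, not a product of the $\OO_{X,x}$), but your conclusion stands once you instead transfer depth along the finite extension $\OO_{\PP{n},y}\hookrightarrow(\pi_{*}\OO_{X})_{y}$ and then localize at $x$.
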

\begin{proof}[\proofname\ of \cref{Proposition: Ulrich object characterization}]
    Let us prove that (1) implies (2).
    By the construction of $\pi$, 
    $\pi^{\ast}\OO_{\PP{n}}(1)\cong\OO_X(1)$.
    Thus, for any $i\in\ZZ$ and for $j= 1, \dots, n$,
    \begin{align*}
        \Hom_{D^b(\PP{n})}^i(\OO_{\PP{n}}(j), \pi_{\ast}E) 
        &\cong\Hom_{D^b(X)}^i(\pi^{\ast}\OO_{\PP{n}}(j), E)\\
        &\cong\Hom_{D^b(X)}^i(\OO_X(j), E) \\
        &\cong H^i(E(-j)) = 0.
    \end{align*}
    Therefore, 
    \[
    \pi_{\ast}E \in \langle \OO(1), \dots, \OO(n)\rangle^{\perp} 
    = \langle \OO \rangle, 
    \]
    that proves what we want.

    Next, we will prove that (2) implies (3).
    As higher direct images of $\Rbf\pi_{\ast}$ vanish since $\pi$  is finite, 
    $\Rbf\pi_{\ast}E$ is the form of
    \begin{equation}
        \dots \to \pi_{\ast}E_{-1} \to \pi_{\ast}E_{0} \to \pi_{\ast}E_{1} \to \cdots \ .
        \notag
    \end{equation}
    This complex splits by the assumption (2) and $\langle \OO_{\PP{n}} \rangle \cong D^b(\Spec\CB)$, 
    that is, 
    \[
    \pi_{\ast}E \cong \bigoplus_{i\in\ZZ} \OO_{\PP{n}}^{m_i}[-i].
    \]
    This shows $\pi_{\ast}\HC^{i}(E)\cong \OO_{\PP{n}}^{m_i}$.
    Note that here the fact is used that $\Rbf\pi_{\ast}$ commute with $\HC^i(-)$ (cf. \cite[Corollary 3.3.4.]{bondal_van_2003_generators_and_representability_of_functors_in_commutative_and_noncommutative_geometry}). 
    Thus, the characterization of Ulrich bundle \cref{theorem: characterization of Ulrich bundle} claims that each $\HC^{i}(E)$ is Ulrich bundle unless $m_i = 0$.

    Considering the spectral sequence 
    \[
    E_2^{p,q} = H^{p}(\HC^q(E)(-j)) \Longrightarrow H^{p+q}(E(-j)), 
    \]
    we have that (3) implies (1).
\end{proof}
A generalization of the equivalence of (1) and (2) in \cref{Proposition: Ulrich object characterization} is the following proposition, which is a well-known statement for the case of Ulrich bundles.

\begin{prop}\label{proposition: finite morphism Ulrich obj}
    Let $\pi: X \to Y$ be a finite morphism between the same dimensional smooth projective varieties, 
    and $\OO_Y(1)$ be a very ample line bundle on $Y$. 
    Assume that $\pi^{\ast}\OO_Y(1)$ is also very ample on $X$.
    Then, $E$ is Ulrich object with respect to $\pi^{\ast}\OO_Y(1)$
    if and only if
    $\pi_{\ast}E$ is Ulrich bundle with respect to $\OO_Y(1)$.
\end{prop}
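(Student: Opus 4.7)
The plan is to reduce the equivalence to a single cohomological identity. Because $\pi$ is finite, $R^{k}\pi_{\ast}=0$ for $k>0$, so $\Rbf\pi_{\ast}=\pi_{\ast}$; combined with the projection formula and the identification $\OO_{X}(1)=\pi^{\ast}\OO_{Y}(1)$, this yields, for all $i\in\ZZ$ and all integers $j$,
\[
H^{i}(X, E(-j)) \;\cong\; H^{i}\bigl(Y,\, (\pi_{\ast}E)(-j)\bigr).
\]
Since $\dim X=\dim Y=n$, the Ulrich vanishing range $1\le j\le n$ is literally the same on both sides, so the cohomological conditions defining an Ulrich object on $(X,\pi^{\ast}\OO_{Y}(1))$ match exactly with those defining an Ulrich object on $(Y,\OO_{Y}(1))$ for $\pi_{\ast}E$. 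This already yields the equivalence at the level of Ulrich \emph{objects}.

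To pass from Ulrich object to Ulrich \emph{bundle} on the $Y$--side, I would invoke \cref{Proposition: Ulrich object characterization}(3): every nonzero cohomology sheaf of an Ulrich object is an Ulrich bundle. Because $\pi$ is finite (hence affine), the direct image $\pi_{\ast}\colon\Coh(X)\to\Coh(Y)$ is both exact and faithful. Exactness gives $\HC^{i}(\pi_{\ast}E)=\pi_{\ast}\HC^{i}(E)$, and faithfulness ensures that $\pi_{\ast}E$ is concentrated in a single degree iff $E$ is. In that case the lone cohomology sheaf $\pi_{\ast}\HC^{0}(E)$ satisfies the cohomological criterion of \cref{theorem: characterization of Ulrich bundle} on $Y$ by the displayed isomorphism, so it is an Ulrich bundle; conversely, if $\pi_{\ast}E$ is (a sheaf and hence) an Ulrich bundle, faithfulness forces $E$ to be a sheaf, and the same identity makes it Ulrich on $X$.

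The only mildly subtle point I anticipate is tracking the equivalence between ``Ulrich object'' (which allows cohomology in multiple degrees) and ``Ulrich bundle'' (a sheaf) across $\pi_{\ast}$; this is handled precisely by the faithful exactness of $\pi_{\ast}$ on coherent sheaves for finite morphisms. Once that bookkeeping is in place, the rest of the argument is a direct application of the projection formula together with \cref{Proposition: Ulrich object characterization}, with no further geometric input required.
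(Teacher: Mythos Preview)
Your core argument is essentially the paper's. Both deduce the identity $H^{i}(X,E(-j))\cong H^{i}(Y,(\pi_{\ast}E)(-j))$ from the vanishing of higher direct images for the finite map $\pi$: you via the projection formula and $\Rbf\pi_{\ast}=\pi_{\ast}$, the paper via the Leray spectral sequence $E_{2}^{p,q}=H^{q}(Y,R^{p}\pi_{\ast}E(-j))\Rightarrow H^{p+q}(X,E(-j))$, which degenerates for the same reason. Since $\dim X=\dim Y$, this identity immediately transfers the Ulrich-object vanishing conditions back and forth, and that is exactly where the paper's proof stops.

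Your second and third paragraphs go beyond the paper in trying to justify the word ``bundle'' on the $Y$-side. This is well-intentioned but cannot succeed for the statement as literally written: if $E$ is an Ulrich object with nonzero $\HC^{i}(E)$ in more than one degree (e.g.\ a Yoneda type Ulrich object as in \cref{example: Yoneda type Ulrich object}), then by exactness of $\pi_{\ast}$ the complex $\pi_{\ast}E$ also has cohomology in several degrees and is therefore \emph{not} a sheaf, hence not an Ulrich bundle. Your own observation that ``$\pi_{\ast}E$ is concentrated in a single degree iff $E$ is'' exhibits precisely this obstruction to the forward implication. The paper's proof does not address this point either; given the sentence introducing the proposition (as a generalization of $(1)\Leftrightarrow(2)$ in \cref{Proposition: Ulrich object characterization}), the word ``bundle'' in the statement is almost certainly a slip for ``object'', and the Leray/projection-formula identity is the entire intended content. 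Your first paragraph already proves that; the rest can be dropped.
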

\begin{proof}[\proofname\ of \cref{proposition: finite morphism Ulrich obj}]
    Considering the Leray's spectral sequence
    \begin{equation}
        E_{2}^{p,q} = H^q(Y, R^{p}\pi_{\ast}E(-j)) \Rightarrow H^{p+q}(X, E(-j))
        \notag
    \end{equation}
    and higher direct image vanishes because $\pi$ is finite, 
    we have the claim. 
\end{proof}

\begin{prop}\label{proposition: hyperplane section of Ulrich object}
    Let $Y\in |H|$ be a general hyperplane section of $X$, $E$ an Ulrich object on $X$, 
    and $\OO_Y(1)\coloneqq \OO_X(1)|_{Y}$. 
    Then, $E|_Y$ is also an Ulrich object for $(Y, \OO_Y(1))$.
\end{prop}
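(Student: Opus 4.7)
The plan is to reduce the statement to a cohomology long exact sequence coming from the structure sequence of $Y$ inside $X$. Since $Y$ is a Cartier divisor cut out by a section of $\OO_X(1)$, we have the Koszul resolution
\[
0\longrightarrow \OO_X(-1)\longrightarrow \OO_X\longrightarrow \OO_Y\longrightarrow 0.
\]
Derived-tensoring this with $E(-j)$ gives, via the projection formula $E\otimes^{\Lbf}\OO_Y\cong \iota_\ast\Lbf\iota^\ast E$ (where $\iota\colon Y\hookrightarrow X$), a distinguished triangle in $D^b(X)$
\[
E(-j-1)\longrightarrow E(-j)\longrightarrow \iota_\ast\bigl((\Lbf\iota^\ast E)(-j)\bigr)\longrightarrow E(-j-1)[1].
\]
Here we interpret $E|_Y$ as the derived restriction $\Lbf\iota^\ast E\in D^b(Y)$; this is bounded because, by Bertini, the general hyperplane section $Y$ is smooth, so $\iota$ is a regular embedding.

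Applying $\Rbf\Gamma$ yields the long exact sequence
\[
\cdots\to H^i(E(-j-1))\to H^i(E(-j))\to H^i\bigl(Y,(E|_Y)(-j)\bigr)\to H^{i+1}(E(-j-1))\to\cdots
\]
By the Ulrich condition on $E$, both $H^\ast(E(-k))$ vanish for $1\le k\le n$. Hence for any $j$ with $1\le j\le n-1$, the indices $j$ and $j+1$ both lie in the range $[1,n]$, so the two flanking groups vanish and one obtains $H^i\bigl(Y,(E|_Y)(-j)\bigr)=0$ for every $i\in\ZZ$ and every $1\le j\le \dim Y$. This is exactly the definition of an Ulrich object on $(Y,\OO_Y(1))$.

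I do not expect a real obstacle here: the only subtlety is bookkeeping the derived restriction and invoking Bertini to guarantee that $Y$ is smooth and that $\Lbf\iota^\ast E\in D^b(Y)$ — without smoothness of $Y$ one cannot a priori guarantee boundedness of the restriction and the statement would need to be phrased inside $D^-(Y)$ or $\Perf(Y)$. Once this is noted, the computation is just the Koszul triangle plus the definition of an Ulrich object, and the genericity of $Y$ enters only through Bertini.
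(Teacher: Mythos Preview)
Your proof is correct and takes essentially the same approach as the paper's: both use the distinguished triangle $E(-j-1)\to E(-j)\to E|_Y(-j)\to E(-j-1)[1]$ coming from the ideal sequence of $Y\in|\OO_X(1)|$ and read off the vanishing of $H^i(E|_Y(-j))$ for $1\le j\le n-1$ from the long exact sequence. One minor remark on your commentary: boundedness of $\Lbf\iota^\ast E$ does not actually require $Y$ to be smooth --- since $Y$ is a Cartier divisor, $\iota$ is already a regular embedding and $\Lbf\iota^\ast$ has Tor-amplitude $\le 1$; Bertini is needed only to keep $Y$ within the paper's standing convention that all varieties are smooth.
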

\begin{proof}[\proofname\ of \cref{proposition: hyperplane section of Ulrich object}]
    Considering the triangle
    \begin{equation*}
        E(-j-1) \to E(-j) \to E|_Y(-j) \to E(-j-1)[1]
    \end{equation*}
    and setting $j=1, \dots, n-1$, 
    $H^i(E|_Y(-j)) = 0$ hold for any $i\in\ZZ$ and $j = 1,\dots, n-1$.
\end{proof}

\begin{lem}\label{lemma: Ulrich object 2 out of 3}
    Let $E, F, G$ be objects of $D^b(X)$ such that there exists the distinguished triangle
    \[
    E\to F\to G\to E[1].
    \]
    If two of $E, F, G$ are Ulrich objects, then the other is. 
\end{lem}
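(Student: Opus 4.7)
The statement is a standard two-out-of-three property, and my plan is to reduce it to the two-out-of-three property for vanishing in the long exact sequence of cohomology. The key observation is that the Ulrich object condition of \cref{definition: Ulrich object} is purely cohomological, imposed on all integer shifts $i$ and on the twists $j\in\{1,\dots,n\}$, so it is well adapted to such an argument.

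First I would tensor the given distinguished triangle with the line bundle $\OO_X(-j)$ for each fixed $j\in\{1,\dots,n\}$. Since $\OO_X(-j)$ is an invertible (in particular flat) sheaf, the functor $-\otimes\OO_X(-j)$ is exact and preserves distinguished triangles, yielding
\[
E(-j)\longrightarrow F(-j)\longrightarrow G(-j)\longrightarrow E(-j)[1].
\]
Next, I apply the derived global sections functor $\Rbf\Gamma$, which is triangulated, to obtain a distinguished triangle in $D^b(\mathbf{Vec})$. Taking cohomology produces the long exact sequence
\[
\cdots\to H^{i-1}(G(-j))\to H^i(E(-j))\to H^i(F(-j))\to H^i(G(-j))\to H^{i+1}(E(-j))\to\cdots.
\]

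Now suppose two of $E,F,G$ are Ulrich objects; then two of the three cohomology groups in each position vanish for all $i\in\ZZ$ and all $j\in\{1,\dots,n\}$, so by exactness the third vanishes as well. Running this over every $j\in\{1,\dots,n\}$ and every $i\in\ZZ$ shows that the remaining object also satisfies the defining vanishing of \cref{definition: Ulrich object}, hence is Ulrich. There is no real obstacle here beyond making sure that tensoring preserves the triangle and that $\Rbf\Gamma$ is triangulated; both are standard. (Alternatively, one could phrase the same argument via \cref{Proposition: Ulrich object characterization}: the pushforward $\pi_\ast$ preserves distinguished triangles and $\langle\OO_{\PP{n}}\rangle$ is thick, so the two-out-of-three property for membership in $\langle\OO_{\PP{n}}\rangle$ transfers back.)
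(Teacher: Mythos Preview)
Your proof is correct and follows exactly the approach in the paper: tensor the triangle by $\OO_X(-j)$ for $j=1,\dots,n$ and use the resulting long exact sequence in cohomology to deduce the vanishing for the third object. Your parenthetical alternative via \cref{Proposition: Ulrich object characterization} is also valid but goes beyond what the paper records.
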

\begin{proof}[\proofname\ of \cref{lemma: Ulrich object 2 out of 3}]
    For each $j=1, \dots, n$, 
    \[
    E(-j)\to F(-j)\to G(-j)\to E(-j)[1]
    \]
    is a distinguished triangle. Thus, the claim can be concluded by taking the long exact sequence. 
\end{proof}

\begin{prop}\label{proposition: Ulrich object on product}
    Let $E$ (resp. $F$) be an Ulrich object on $(X, \OO_X(1))$ (resp. $(Y, \OO_Y(1))$). 
    Then, $E \boxtimes F(\dim X)$ and $E(\dim Y)\boxtimes F$ 
    are Ulrich objects on $(X\times Y, \OO_X(1)\boxtimes \OO_Y(1))$.
\end{prop}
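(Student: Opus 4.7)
The plan is to verify the cohomological vanishing in \cref{definition: Ulrich object} by a direct application of the K\"unneth formula. Write $n = \dim X$ and $m = \dim Y$, so $\dim(X\times Y) = n+m$. Since $\OO_{X\times Y}(1) = \OO_X(1)\boxtimes \OO_Y(1)$, for any integer $k$ the projection formula gives
\[
\bigl(E \boxtimes F(n)\bigr)(-k) \cong E(-k) \boxtimes F(n-k),
\]
and since we are working over the field $k=\overline{k}$ with bounded complexes, K\"unneth yields
\[
H^i\bigl(X\times Y,\, E(-k)\boxtimes F(n-k)\bigr) \cong \bigoplus_{p+q=i} H^p\bigl(X, E(-k)\bigr)\otimes_k H^q\bigl(Y, F(n-k)\bigr).
\]

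Next I would split the range $1\le k\le n+m$ into two cases. For $1\le k\le n$, the Ulrich hypothesis on $E$ gives $H^p(X,E(-k))=0$ for every $p\in\ZZ$, hence every K\"unneth summand vanishes and so $H^i\bigl((E\boxtimes F(n))(-k)\bigr)=0$ for all $i$. For $n+1\le k\le n+m$, set $k' := k-n\in\{1,\dots,m\}$; then $F(n-k)=F(-k')$ and the Ulrich hypothesis on $F$ gives $H^q(Y, F(-k'))=0$ for every $q$, so again each K\"unneth summand vanishes. This covers the full required range, showing that $E\boxtimes F(n)$ is an Ulrich object on $(X\times Y, \OO_X(1)\boxtimes\OO_Y(1))$.

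The second object $E(m)\boxtimes F$ is handled by the symmetric computation: $(E(m)\boxtimes F)(-k) \cong E(m-k)\boxtimes F(-k)$, where $1\le k\le m$ is dealt with by the Ulrichness of $F$, and $m+1\le k\le m+n$ is dealt with by the Ulrichness of $E$ after the shift $k\mapsto k-m\in\{1,\dots,n\}$.

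I do not expect a serious obstacle here: the argument is essentially bookkeeping on the twist, and the only substantive input is the K\"unneth isomorphism for $\Rbf\Gamma$ of an external tensor product of bounded complexes over a field, which is standard (alternatively one can phrase it via $\Rbf p_{X\ast}(E\boxtimes F(n-k))\cong E(-k)\otimes \Rbf\Gamma(Y, F(n-k))$ by flat base change and the projection formula, and then take $\Rbf\Gamma(X,-)$, avoiding an explicit appeal to K\"unneth). The only point worth emphasizing in the writeup is that the Ulrich vanishing is required for \emph{all} $i\in\ZZ$, which is exactly what makes every summand on the right-hand side of the K\"unneth decomposition vanish regardless of the index~$q$ (resp.~$p$) in the complementary factor.
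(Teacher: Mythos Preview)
Your proof is correct and follows exactly the same approach as the paper: both apply the K\"unneth formula to $H^i\bigl((E\boxtimes F(n))(-k)\bigr)$ and read off the vanishing from the Ulrich hypotheses on the two factors. Your write-up is in fact more detailed than the paper's own proof, which simply records the K\"unneth isomorphism and leaves the case split $1\le k\le n$ versus $n+1\le k\le n+m$ implicit.
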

\begin{proof}[\proofname\ of \cref{proposition: Ulrich object on product}]
    Let $n=\dim X$. Let us show the case of $E \boxtimes F(\dim X)$.
    The claim easily follows from the K\"{u}nneth formula 
    \[
    H^i(X\times Y, E\boxtimes F(n)\otimes \OO_X(-j)\boxtimes \OO_Y(-j))
    = \bigoplus_{p+q=i} H^p(X, E(-j))\otimes H^q(Y, F(n-t)).
    \]
\end{proof}

\section{Ulrich Objects: Examples}\label{section: examples of Ulrich object}
\subsection{Yoneda Type Ulrich Objects}\label{subsection: Yoneda type Ulrich objects}

The next subject of consideration shifts to the Yoneda extension.
We refer \cite{oort_1964_yoneda_extensions_in_abelian_categories} for the basic notions and results of the Yoneda extension.

\begin{dfn}\label{definition: Yoneda extension}
    Let $\FC$ and $\GC$ be sheaves on $X$.
    Assume that there exists a non-trivial element $\eta\in\Ext^{k}(\FC, \GC)$ for some $k\ge1$.
    Then, there is the following exact sequence corresponding to $\eta$:
    \begin{equation}
        0 \to \GC \to E_{k-1} \to \cdots \to E_0 \to \FC \to 0.
        \notag
    \end{equation}

In this paper, we refer to the object 
\begin{equation}
    E_{\eta}: (\cdots 0 \to E_{k-1} \to \cdots \to E_0 \to 0 \cdots) \in D^b(X) \notag
\end{equation}
as the \emph{Yoneda extension} of degree $k$ corresponds to $\eta\in\Ext^{k}(\FC, \GC)$
(here, the degree of $E_j$ as the complex is the same as $j$).

    Note that two Yoneda extensions 
    \begin{equation}
        0 \to \GC \to E_{k-1} \to \cdots \to E_0 \to \FC \to 0,\notag
    \end{equation}
    and
    \begin{equation}
        0 \to \GC \to F_{k-1} \to \cdots \to F_0 \to \FC \to 0 \notag
    \end{equation}
    are said to be equivalent if there exists another Yoneda extension 
    \begin{equation*}
        0 \to \GC \to G_{k-1} \to \cdots \to G_0 \to \FC \to 0
    \end{equation*}
    such that there is the following commutative diagram:
    \begin{equation*}
\begin{tikzcd}
0 \arrow[r] & \GC \arrow[r]& E_{k-1} \arrow[r] & \cdots \arrow[r] & E_1 \arrow[r] & E_0 \arrow[r]         & \FC \arrow[r]  & 0 
\\
0 \arrow[r] & \GC \arrow[r] \arrow[u, "\id"'] \arrow[d, "\id"] & G_{k-1} \arrow[r] \arrow[d] \arrow[u] & \cdots \arrow[r] & G_1 \arrow[r] \arrow[u] \arrow[d] & G_0 \arrow[r] \arrow[d] \arrow[u] & \FC \arrow[r] \arrow[u, "\id"] \arrow[d, "\id"'] & 0 
\\
0 \arrow[r] & \GC \arrow[r] & F_{k-1} \arrow[r]& \cdots \arrow[r] & F_1 \arrow[r] & F_0 \arrow[r]& \FC \arrow[r]  & 0.
\end{tikzcd}
\end{equation*}
\end{dfn}

The Yoneda extension $E$ is isomorphic to $\GC[-k+1]\oplus\FC$ if and only if 
$E$ is corresponding to $0\in\Ext^{k}(\FC, \GC)$.
In particular, it is observed that the Yoneda extension gives a non-trivial example of Ulrich object: 
\begin{eg}\label{example: Yoneda type Ulrich object}
    Let $\FC$ and $\GC$ be Ulrich bundles. 
    Assume that there is a non-trivial element $\eta\in\Ext^{m}(\FC, \GC)$
    for $m\ge1$.
    Then, Yoneda extension defined in \cref{definition: Yoneda extension}
    \begin{equation}
        E_{\eta}: \cdots \to 0\to E_{-m+1}\to \cdots \to E_{0} \to 0 \to \cdots \notag 
    \end{equation}
    correspond to $\eta$ is an Ulrich object. 

    Indeed, because
    \begin{equation}
        \HC^i(E) = 
        \begin{cases}
            \FC & \text{if } i= 0,\\
            \GC & \text{if } i= -m+1,\\
            0 & \text{otherwise},
        \end{cases}
        \notag
    \end{equation}
    it is shown from \cref{Proposition: Ulrich object characterization}.
\end{eg}

\begin{dfn}\label{definition: Yoneda type Ulrich objects}
    We call the Ulrich object constructed in the way of \cref{example: Yoneda type Ulrich object} (up to shifts) as \emph{Yoneda type Ulrich object}.
\end{dfn}

\subsection{Explicit Examples}
First, directly from \cref{Proposition: Ulrich object characterization}, 
the object $E\in D^b(\PP{n})$ is Ulrich with respect to $\OO(1)$ if and only if 
$E\in\langle\OO\rangle$.

When the dimension of the variety is $1$, Ulrich objects are just a direct sum of Ulrich bundles from \cref{Proposition: Ulrich object characterization}.
    Indeed, in this case, any object $E$ is formal, that is, $E$ can be written as 
    \[
    E\cong \bigoplus_{i\in\ZZ}\HC^i(\EC)[-i]
    \]
    (see \cite[Corollary 3.15]{book_huybrechts_2006_fouriermukai_transforms_in_algebraic_geometry}). 
    Thus, any indecomposable Ulrich object on $(C, \OO(1))$ is only an Ulrich bundle.

\subsubsection{Ulrich Objects on Quadric Hypersurfaces}\label{subsubsection: examples the case of quadric Hypersurfaces}
The Ulrich bundles on quadric hypersurfaces $Q_n$ are bundles known as \emph{Spinor bundles}.
The original reference of spinor bundles on quadrics is \cite{ottaviani_1988_spinor_bundles_on_quadrics}.
The derived categorical treatment of quadrics are in \cite[Section 4.]{kapranov_1988_on_the_derived_categories_of_coherent_sheaves_on_some_homogeneous_spaces}.
\begin{thm}[\cite{kapranov_1988_on_the_derived_categories_of_coherent_sheaves_on_some_homogeneous_spaces}]\label{Theorem: SOD of quadric hypersurface Kapranov collection}
    Let $Q_n\subset \PP{n+1}$ be a quadric hypersurface.
    Then, there is a full exceptional collection
    \begin{equation}
        D^{b}(Q_n) = 
        \begin{cases}
            \langle \OO, S, \OO(1), \dots, \OO(n-1) \rangle &\text{if } n:\text{odd,}\\
            \langle \OO, S^+, S^-, \OO(1), \dots, \OO(n-1) \rangle &\text{if } n:\text{even,}
        \end{cases}
        \notag
    \end{equation}
    where $S$, $S^+$, and $S^-$ are the spinor bundles.
\end{thm}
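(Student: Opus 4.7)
The plan is to follow Kapranov's original strategy, which views $Q_n$ as the rational homogeneous variety $\mathrm{Spin}(n+2)/P$ and exploits the fact that $\OO(1)$ together with the (half-)spin representations of the parabolic $P$ provide enough globally generated bundles to resolve the diagonal. The proof separates into two halves: establishing the vanishings that promote the listed collection to an exceptional collection, and then proving fullness via a Beilinson-type resolution of the diagonal.

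For the first half, the line-bundle computations reduce to the cohomology $H^\bullet(Q_n, \OO(t))$, which one obtains from the short exact sequence
\[
0 \to \OO_{\PP{n+1}}(t-2) \to \OO_{\PP{n+1}}(t) \to \OO_{Q_n}(t) \to 0
\]
together with Bott vanishing on $\PP{n+1}$; this immediately yields $\Hom^k(\OO(i), \OO(j)) = 0$ for $i > j$ with $i - j \le n-1$ and the exceptionality of each $\OO(i)$. For the spinor bundles $S$ (respectively $S^\pm$) I would apply Borel--Weil--Bott to the (half-)spin weights. This produces the exceptionality relations $\Hom(S, S) = \CB$ and $\Ext^k(S, S) = 0$ for $k>0$, and the analogous facts for $S^\pm$, together with the cross-vanishings placing $S$ immediately after $\OO$ and the mutual orthogonality $\Hom^\bullet(S^-, S^+) = 0$ (in the prescribed order) for $n$ even. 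The vanishings $\Hom^\bullet(\OO(i), S) = 0$ and $\Hom^\bullet(\OO(i), S^\pm) = 0$ for $i \ge 1$ are extracted from the same framework.

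For fullness, I would construct a Koszul-type resolution of the diagonal $\OO_\Delta$ on $Q_n \times Q_n$ whose terms are external products of the form $E_i^{\vee} \boxtimes E_i^{!}$, where $E_i$ ranges over the proposed collection and $E_i^{!}$ denotes an appropriate left-dual object. Substituting this resolution into the Fourier--Mukai formula $F \cong p_{2\ast}(\OO_\Delta \otimes p_1^{\ast} F)$ then expresses every $F \in D^b(Q_n)$ as an iterated cone built from shifts of the $E_i$, which is exactly fullness.

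The main obstacle is the construction of the diagonal resolution, because it demands an explicit geometric model of the spinor bundles on the product $Q_n \times Q_n$, not merely a cohomological description. Kapranov's original argument goes through the tautological bundle on the orthogonal Grassmannian and produces a two-variable Koszul complex whose cohomology computes $\OO_\Delta$; a more hands-on alternative is to realize $S$ (or $S^\pm$) as the cokernel of a matrix factorization of the defining quadratic form and take its exterior tensor product with the dual factorization. Treating the even and odd $n$ cases uniformly, and ensuring that $S^+$ and $S^-$ appear symmetrically in the diagonal resolution for even $n$, is where the main technical care is needed.
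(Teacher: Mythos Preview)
The paper does not supply its own proof of this theorem: it is quoted verbatim as a result of Kapranov and used as a black box, so there is nothing in the paper to compare your argument against. What you have sketched is precisely Kapranov's original strategy from the cited reference---Borel--Weil--Bott (together with the hypersurface sequence on $\PP{n+1}$) for the exceptionality and orthogonality relations, and a resolution of $\OO_\Delta$ on $Q_n\times Q_n$ for fullness---so your outline is correct in spirit and aligned with the source the paper invokes. Your honest flagging of the diagonal resolution as the nontrivial step is accurate: that is where the real work in Kapranov's paper lies, and your proposal remains a plan rather than a proof until that complex is written down explicitly (via the Clifford-algebra/matrix-factorization description of the spinor bundles) and its exactness is verified.
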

The Kapranov collection above claims that 
$E\in D^b(Q_n)$ is Ulrich object if and only if 
$E\in\langle S \rangle$ when $n$ is odd and 
$E\in\langle S^+, S^- \rangle$ when $n$ is even.

More precisely, 
when $n$ is odd Ulrich object $E$ is of the form of 
\begin{equation}
E\cong \bigoplus_{i\in\ZZ} S^{m_i}[-i]. \notag
\end{equation}
Noting that $S^+$ and $S^-$ are orthogonal, 
when $n$ is even Ulrich object $E$ is of the form of 
\begin{equation}
    E\cong \bigoplus_{i\in\ZZ} (S^{-})^{m_i}[-i] \oplus \bigoplus_{j\in\ZZ} (S^{+})^{m_j}[-j].\notag
\end{equation}

\subsubsection{Ulrich Objects on Surfaces}\label{subsubsection: examples the case of Surfaces}

In the case of curve $C$, any object in $D^b(C)$ splits into the direct sum of shifts of sheaves, but in the case of surface, the following proposition is known by Uehara and Ishii:

\begin{prop}[{\cite[Proposition 3.2.]{ishii_uehara_2005_autoequivalences_of_derived_categories_on_the_minimal_resolutions_of_ansingularities_on_surfaces}}]\label{proposition: Uehara Ishii objects in der cats of surfaces}
    Let $S$ be a surface.
    Giving a object in $D^b(C)$ is equivalent to giving finitely many sheaves $\GC^i$ and element 
    \[
    e^i\in\Ext^2(\GC^i, \GC^{i-1})
    \]
    for each $i$.
\end{prop}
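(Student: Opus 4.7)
The plan is to proceed by induction on the amplitude of $E \in D^b(S)$ using the standard truncation triangles
\[
\tau_{\leq i-1}E \to \tau_{\leq i}E \to \GC^i[-i] \xrightarrow{\delta_i} (\tau_{\leq i-1}E)[1],
\]
where $\GC^i \coloneqq \HC^i(E)$. The crucial input is that $S$ is a smooth surface, so $\gldim\Coh(S) = 2$ and therefore $\Ext^k(\FC, \GC) = 0$ for any coherent $\FC, \GC$ once $k \geq 3$. This vanishing is exactly what forces the gluing data between successive cohomology sheaves to live precisely in $\Ext^2$.

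To extract the data from $E$, the connecting map $\delta_i$ lies in $\Hom_{D^b(S)}(\GC^i, \tau_{\leq i-1}E[i+1])$. I would compute this group by peeling off the top cohomology $\GC^{i-1}$ via the truncation triangle
\[
\tau_{\leq i-2}E \to \tau_{\leq i-1}E \to \GC^{i-1}[-(i-1)] \to (\tau_{\leq i-2}E)[1],
\]
applying $\Hom(\GC^i, -[i+1])$, and observing that the flanking terms $\Hom(\GC^i, \tau_{\leq i-2}E[i+1])$ and $\Hom(\GC^i, \tau_{\leq i-2}E[i+2])$ are assembled, via iterated further truncation triangles, out of groups of the form $\Ext^k(\GC^i, \GC^j)$ with $k \geq 3$, all of which vanish. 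This yields a canonical isomorphism
\[
\Hom(\GC^i, \tau_{\leq i-1}E[i+1]) \cong \Hom(\GC^i, \GC^{i-1}[2]) = \Ext^2(\GC^i, \GC^{i-1}),
\]
so $\delta_i$ produces a well-defined class $e^i \in \Ext^2(\GC^i, \GC^{i-1})$.

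For the reverse direction, given data $(\{\GC^i\}, \{e^i\})$ with only finitely many $\GC^i$ nonzero, I would reconstruct $E$ by iterated mapping cones: starting from $E^{(i_0)} \coloneqq \GC^{i_0}[-i_0]$ at the lowest nonzero index, at the next step one needs to promote the class $e^{i+1} \in \Hom(\GC^{i+1}, \GC^i[2])$ to a morphism $\GC^{i+1} \to E^{(i)}[i+2]$ and form the cone. By the same cascade of truncation triangles attached to $E^{(i)}$ and the same vanishing of $\Ext^{\geq 3}(\GC^{i+1}, \GC^j)$, the relevant long exact sequences collapse, so the lift exists and is unique, and the iteration terminates after finitely many steps with an object having the prescribed cohomology sheaves and gluing classes. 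The main obstacle is precisely this careful bookkeeping of the truncation tower: one must verify that at every inductive stage the ambient Hom-group really does collapse, via the $\gldim\Coh(S) = 2$ hypothesis, to the expected $\Ext^2$ between adjacent cohomology sheaves. In higher dimensions this collapse would fail and additional gluing data in $\Ext^3, \Ext^4, \dots$ would appear, so the statement is genuinely two-dimensional in nature.
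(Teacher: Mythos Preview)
The paper does not supply its own proof of this proposition; it is quoted verbatim from Ishii--Uehara with a citation and then used as a black box. Your argument via the truncation tower $\tau_{\leq i}E$ together with the vanishing of $\Ext^{\geq 3}$ between coherent sheaves on a smooth surface is precisely the standard proof, and it is the one given in the cited reference, so there is nothing in the present paper to compare against. The sketch is correct: the key computation that
\[
\Hom(\GC^i,\tau_{\leq i-1}E[i+1])\;\cong\;\Ext^2(\GC^i,\GC^{i-1})
\]
follows exactly as you say, since the obstruction and ambiguity terms coming from $\tau_{\leq i-2}E$ are built from $\Ext^{k}(\GC^i,\GC^j)$ with $k\geq 3$. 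One small point you leave implicit in the reconstruction direction is that the inductively built object is well defined up to isomorphism \emph{compatible with the identifications of cohomology sheaves}: once the lift of $e^{i+1}$ is unique, TR3 furnishes an isomorphism of cones restricting to the identity on $\GC^{i+1}$ and to the previously chosen isomorphism on $\tau_{\leq i}$, so the induction propagates. This is routine and does not affect the correctness of your outline.
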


Thus, giving an Ulrich object on a surface $S$ is equivalent to giving finitely many
Ulrich sheaves $\GC^i$ and their degree $2$ extensions.

    Moreover, the Ulrichness of the object imposes a strong restriction on its Chern character.
    To see this, for example, let $X$ be a surface of Picard rank $1$.
    Let us write $\Pic(X) = \ZZ[H]$, $H^2 = d$, and $K_X = i_XH$.

\begin{lem}\label{lemma: Chern polynomial of Ulrich objects}
    Let $E$ be an Ulrich object of rank $r$ on $X$. Then, 
    \begin{align}
        \ch([E]) = r +\frac{r}{2}(i_X+3)H + \bigg( -r\chi(\OO_X) + \frac{rd}{4}(i_X^2+3i_X+4) \bigg).\label{tag: explicit chern polynomial of E}
    \end{align}
\end{lem}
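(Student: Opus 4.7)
The plan is to apply Hirzebruch--Riemann--Roch to the twists $E(-j)$ and exploit the vanishings of Euler characteristics packaged into the definition of an Ulrich object. Because $X$ has Picard rank one, the Chern character of $E$ is determined by the given rank $r$ together with only two rational unknowns: a coefficient $\alpha$ with $c_1(E) = \alpha H$, and the top-degree term $\gamma = \int_X \ch_2(E)$. Noether's formula supplies $\mathrm{td}(X) = 1 - \tfrac{i_X}{2}H + \chi(\OO_X)$, so all ingredients of HRR are explicit.

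I first note that HRR remains valid for objects of $D^b(X)$ via $\ch([E]) = \sum_i (-1)^i \ch(\HC^i(E))$ and $\chi(E) = \sum_i (-1)^i \dim H^i(X, E)$. In particular, the defining hypothesis $H^i(E(-j)) = 0$ for every $i \in \ZZ$ and $j \in \{1, 2\}$ translates immediately into $\chi(E(-1)) = \chi(E(-2)) = 0$.

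Next, I would expand $\chi(E(-j)) = \int_X \ch(E) \cdot \ch(\OO(-j)) \cdot \mathrm{td}(X)$ as a quadratic polynomial in $j$ whose coefficients are affine in the unknowns $\alpha$ and $\gamma$. The two vanishing conditions then give two linear equations in these two unknowns; subtracting them yields $\alpha = \tfrac{r}{2}(i_X + 3)$, and back-substitution yields $\gamma = -r\chi(\OO_X) + \tfrac{rd}{4}(i_X^2 + 3i_X + 4)$, which reassembles into the claimed identity for $\ch([E])$.

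No conceptual obstacle is expected; the argument is pure bookkeeping. The only structural point worth flagging is the dimensional count: on a Picard-rank-one surface the Ulrich condition furnishes exactly two linear constraints on $\ch([E])$ (one for each $j = 1, 2$), which is precisely what is needed to pin down the two unknowns $\alpha$ and $\gamma$ that are not already part of the data $r$.
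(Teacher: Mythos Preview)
Your proposal is correct and follows essentially the same route as the paper: write $\ch_1(E)$ and $\ch_2(E)$ in terms of two unknowns using $\rho(X)=1$, apply Hirzebruch--Riemann--Roch to $E(-1)$ and $E(-2)$, use the Ulrich vanishings to obtain $\chi(E(-1))=\chi(E(-2))=0$, and solve the resulting pair of linear equations. The only cosmetic difference is that the paper parametrizes $\ch_2(E)=e_2 H^2$ whereas you take $\gamma=\int_X\ch_2(E)$ directly; your explicit remark that HRR passes to $D^b(X)$ via the alternating sums is a welcome clarification.
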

\begin{proof}[\proofname\ of \cref{lemma: Chern polynomial of Ulrich objects}]
    As $\rho(X) =1$, we can write $\ch_1E = e_1H$ and $\ch_2E = e_2H^2= e_2d[\pt]$ with $e_1, e_2\in\ZZ$.
    Then, 
\begin{equation*}
    \chi(E(-k)) = r +(e_1-rkH) + \bigg(e_2d-e_1kH + \frac{k^2}{2}rd\bigg)
\end{equation*}
    and the Hirzebruch-Riemann-Roch Theorem induce 
    \begin{align*}
        r\chi(\OO_X)+d\bigg(-\bigg(\frac{i_X}{2}+1\bigg)e_1 + e_2 + \frac{r}{2}\big(i_X+1\big)\bigg) &= 0,\\
        r\chi(\OO_X)+d\bigg(-\bigg(\frac{i_X}{2}+2\bigg)e_1 + e_2 + r\big(i_X+2\big)\bigg) &= 0.
    \end{align*}
    Thus, we have the result. 
\end{proof}

\section{Application to the Eisenbud-Schreyer's Question}\label{section: applications}
This \cref{section: applications} states the application to the Eisenbud-Schreyer conjecture to show the evidence of the merit for considering the Ulrich object.
As previously stated, for the sake of convenience, the claim of the Eisenbud-Schreyer's conjecture \cref{Question: the Eisenbud-Schreyer's conjecture} is reiterated here:
any smooth projective variety carries at least an Ulrich bundle.

\subsection{Generators of the Derived Category}\label{subsection: generators of the derived category}

For the notions of the generator of the derived category, 
see \cite{bondal_van_2003_generators_and_representability_of_functors_in_commutative_and_noncommutative_geometry} or \cite[\href{https://stacks.math.columbia.edu/tag/09SI}{Tag 09SI}]{stacks-project} for more details.

Before proceeding to the definition of generators, a notation is introduced: 
\begin{align*}
\langle\Omega\rangle^{\perp} &\coloneqq \{E\in \DC \mid \RHom(X, E) =0, {}^{\forall}X\in\langle\Omega\rangle \}\\
 &= \{E\in \DC \mid \RHom(X, E) =0, {}^{\forall}X\in\Omega \}.
\end{align*}

\begin{dfn}\label{definition: generators of derived category}
    Let $\DC$ be a triangulated category and $G$ be an object of $\DC$.
    Then, $G$ is 
    \begin{enumerate}
        \item \emph{classical generator} if $\langle G\rangle =\DC$, and
        \item \emph{weak generator} (or simply, \emph{generator}) if $\langle G\rangle^{\perp} =0$.
        \end{enumerate}
\end{dfn}
As a remark, we mention the notion of \emph{strong generator}.
$G$ is said to be a strong generator if the entire derived category can be generated in a finite number of steps by operations involving direct summands and extensions, starting with $G$.
(See \cite{bondal_van_2003_generators_and_representability_of_functors_in_commutative_and_noncommutative_geometry} or \cite[\href{https://stacks.math.columbia.edu/tag/09SI}{Tag 09SI}]{stacks-project} for the precise definition.)
By definition, a strong generator is a classical generator.
For the other direction of this implication, the following is known:

\begin{lem}[{\cite[\href{https://stacks.math.columbia.edu/tag/0FXA}{Tag 0FXA}]{stacks-project}}]\label{lemma: classical generator is strong generator}
    Let $\DC$ be a triangulated category that has a strong generator. 
    Let $E$ be an object of $\DC$. If $E$ is a classical generator of $\DC$, then $E$ is a strong generator.
\end{lem}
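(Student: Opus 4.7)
The plan is to unwind the definition of strong generator and exploit the finite-step thickening operations $\langle - \rangle_n$ that appear in the Stacks-project / Bondal--Van den Bergh formulation.

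First I would fix notation. For an object $F \in \mathcal{D}$, let $\langle F \rangle_1$ be the smallest strictly full subcategory containing $F$ that is closed under shifts, finite direct sums, and direct summands, and inductively let $\langle F \rangle_{n}$ consist of all objects $X$ that sit in a distinguished triangle $A \to X \to B \to A[1]$ with $A \in \langle F \rangle_{n-1}$ and $B \in \langle F \rangle_1$, together again with the closure under shifts, sums and summands. Then $\langle F \rangle = \bigcup_{n \geq 1} \langle F \rangle_n$, and $F$ is a strong generator precisely when there exists a finite $n$ with $\mathcal{D} = \langle F \rangle_n$.

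Next, I would pick a strong generator $G$ for $\mathcal{D}$, so $\mathcal{D} = \langle G \rangle_n$ for some $n$. Since $E$ is a classical generator, $G \in \langle E \rangle = \bigcup_m \langle E \rangle_m$, so $G \in \langle E \rangle_m$ for some $m$. The goal is then to show
\[
\mathcal{D} = \langle G \rangle_n \;\subseteq\; \langle E \rangle_{mn},
\]
from which the lemma follows immediately.

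The key step, and the one I would treat most carefully, is a monotonicity/composition estimate of the form
\[
\langle \langle E \rangle_m \rangle_n \;\subseteq\; \langle E \rangle_{mn}.
\]
I would prove this by induction on $n$. The base case $n=1$ is clear, since taking shifts, sums and summands of objects already in $\langle E \rangle_m$ stays in $\langle E \rangle_m$. For the inductive step, an object of $\langle \langle E \rangle_m \rangle_n$ fits (up to summands, shifts, and sums) in a triangle $A \to X \to B \to A[1]$ with $A \in \langle \langle E \rangle_m \rangle_{n-1}$ and $B \in \langle E \rangle_m$. By induction $A \in \langle E \rangle_{m(n-1)}$, and then the same triangle, together with the extension axiom in the definition of $\langle E \rangle_{m(n-1)+m}$, places $X$ in $\langle E \rangle_{mn}$. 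Applying this with the inclusion $\langle G \rangle_n \subseteq \langle \langle E \rangle_m \rangle_n$ (which is immediate from $G \in \langle E \rangle_m$ and the monotonicity of $\langle - \rangle_n$ in its argument) yields the desired containment and hence the lemma.

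The only genuine obstacle is the bookkeeping in the inductive step above, namely being careful that the thickening operations behave well with respect to taking triangles whose outer terms come from different steps; once that combinatorial lemma is in hand, the overall argument is essentially a one-line combination $\mathcal{D} = \langle G \rangle_n \subseteq \langle E \rangle_{mn} \subseteq \mathcal{D}$.
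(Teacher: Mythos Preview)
The paper does not supply its own proof of this lemma; it simply cites \cite[Tag 0FXA]{stacks-project}. Your argument is precisely the standard one found there (and in Bondal--Van den Bergh): pick a strong generator $G$ with $\mathcal{D}=\langle G\rangle_n$, use that $E$ classically generates to get $G\in\langle E\rangle_m$, and then apply the composition estimate $\langle\langle E\rangle_m\rangle_n\subseteq\langle E\rangle_{mn}$ to conclude $\mathcal{D}=\langle E\rangle_{mn}$. Your inductive sketch of that estimate is correct, so there is nothing to add.
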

Thus, we need not distinguish between classical and strong in our cases.

\begin{thm}[see {\cite[Theorem 3.1.4.]{bondal_van_2003_generators_and_representability_of_functors_in_commutative_and_noncommutative_geometry}}]\label{theorem: existence of strong generator}
    Let $X$ be a smooth projective variety. Then, $D^b(X)$ admits a strong generator. 
\end{thm}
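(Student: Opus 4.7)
The plan is to exhibit an explicit strong generator of $D^b(X)$ via Beilinson's resolution of the diagonal. Fix a very ample line bundle $\OO_X(1)$ giving a closed embedding $\iota : X \hookrightarrow \PP{N}$, and set
\[
G \coloneqq \bigoplus_{i=0}^{N}\OO_X(i).
\]
I will show that $G$ is a strong generator by proving that every object of $D^b(X)$ is built from $G$ via shifts, direct summands and cones in a number of steps bounded by $N$, uniformly in the object.

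First, recall the Beilinson resolution on $\PP{N}\times\PP{N}$:
\[
0\to \OO(-N)\boxtimes \Omega^N(N)\to \cdots \to \OO(-1)\boxtimes \Omega^1(1)\to \OO\boxtimes \OO\to \OO_{\Delta_{\PP{N}}}\to 0,
\]
a length-$(N+1)$ exact sequence of locally free sheaves. Pulling back along the closed immersion $\iota\times\iota : X\times X\hookrightarrow \PP{N}\times\PP{N}$ preserves exactness (every term being locally free) and yields a length-$(N+1)$ resolution of $\OO_{\Delta_X}$ on $X\times X$ whose $i$-th term is $p_1^\ast\OO_X(-i)\otimes p_2^\ast F_i$, with $F_i \coloneqq \iota^\ast\Omega^i_{\PP{N}}(i)\in D^b(X)$.

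Next, for any $E \in D^b(X)$, use the Fourier--Mukai identity $E\cong \Rbf p_{1\ast}\bigl(\Lbf p_2^\ast E \otimes \OO_{\Delta_X}\bigr)$. Substituting the pulled-back resolution of $\OO_{\Delta_X}$ and applying the projection formula to each term produces a Postnikov tower for $E$ of length $N+1$, whose graded pieces have the form $\OO_X(-i)\otimes_k V_i^\bullet$ with $V_i^\bullet \coloneqq \Rbf\Gamma(X, F_i\otimes E)$ a finite-dimensional graded vector space. Each such piece is a finite direct sum of shifts of $\OO_X(-i)$, so assembling the tower shows that $E$ is obtained from $\{\OO_X(-N),\ldots,\OO_X\}$ in at most $N$ cone operations. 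A final twist by $\OO_X(N)$ (or equivalently using the transposed Beilinson resolution) replaces this collection by $\{\OO_X,\ldots,\OO_X(N)\}$, whose direct sum is $G$. Therefore $G$ is a strong generator.

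The main obstacle is the Fourier--Mukai bookkeeping: one must verify that substituting a finite resolution of the kernel into the integral transform actually yields a bounded Postnikov tower with the claimed layers. This is a standard computation using the hypercohomology spectral sequence, but care is required to confirm that the number of cones is uniformly bounded by $N$. Once established, the constant $N+1$ depends only on $\iota$ (not on $E$), which is precisely strong generation.
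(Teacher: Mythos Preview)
The paper itself gives no proof of this statement; it is quoted from Bondal--van den Bergh and used as a black box. Turning to correctness: your argument has a genuine gap at the step where you restrict the Beilinson resolution along $\iota\times\iota$. You assert that this ``preserves exactness (every term being locally free) and yields a length-$(N{+}1)$ resolution of $\OO_{\Delta_X}$''. Local freeness of the terms $\OO(-i)\boxtimes\Omega^i(i)$ only guarantees that the naive and derived pullbacks of the \emph{complex} coincide; it does not force $\Lbf(\iota\times\iota)^*\OO_{\Delta_{\PP{N}}}$ to be a sheaf. In fact the intersection $\Delta_{\PP{N}}\cap(X\times X)$ in $\PP{N}\times\PP{N}$ is never transversal for $n<N$: the two subvarieties have codimensions $N$ and $2(N-n)$, summing to $3N-2n$, while $\Delta_X$ has codimension $2N-n$, an excess of $N-n$. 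In local coordinates with $X=\{x_{n+1}=\cdots=x_N=0\}$ the diagonal generators $x_i-y_i$ restrict on $X\times X$ to a sequence whose last $N-n$ entries vanish identically, so the restricted complex acquires nonzero cohomology in degrees $-1,\dots,-(N-n)$. The Fourier--Mukai kernel you have produced is therefore $\Lbf(\iota\times\iota)^*\OO_{\Delta_{\PP{N}}}$, not $\OO_{\Delta_X}$, and the associated transform is $\Lbf\iota^*\circ\iota_*$, not the identity. (Sanity check: for a conic $X\cong\PP{1}\hookrightarrow\PP{2}$, your three-term complex on $\PP{1}\times\PP{1}$ has Euler characteristic $4$, not $\chi(\OO_{\Delta_{\PP{1}}})=1$.)

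What your computation does correctly establish is that $\Lbf\iota^*\iota_*E$ lies in $\langle G\rangle$ after at most $N$ cones. Getting from $\Lbf\iota^*\iota_*E$ back to $E$ with a uniform bound is precisely the nontrivial content missing here; Bondal--van den Bergh handle it by iterating and controlling the cone via exterior powers of the conormal bundle, but that step requires real additional work beyond what you have written.
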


The following Theorem says that polarizing a very ample line bundle on $X$ fixes a classical generator of $D^b(X)$ in a sense. 
\begin{thm}[{\cite[Theorem 4.]{orlov_2009_remarks_on_generators_and_dimensions_of_triangulated_categories}}]\label{theorem: explicit form of splitting generator G}
    Let $X$ be a smooth projective variety and $\OO_X(1)$ be a very ample line bundle on $X$. 
    Then, 
    $G = \OO_X\oplus\OO_X(1)\oplus\dots\oplus\OO_X(n)$ 
    is a classical generator on $X$.
\end{thm}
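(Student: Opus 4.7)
My plan is to use the finite linear projection $\pi\colon X\to\PP{n}$ from \cref{definition: finite linear projection} together with the Beilinson full exceptional collection on $\PP{n}$ from \cref{Example: Beilinson collection and left dual}. Such a $\pi$ exists since $\OO_X(1)$ is very ample; it is finite (hence affine, so there are no higher direct images), satisfies $\pi^{*}\OO_{\PP{n}}(j)\cong\OO_X(j)$, and is flat by miracle flatness (a finite morphism between smooth equidimensional varieties). In particular $\pi^{*}=\Lbf\pi^{*}$ is an exact, triangulated functor preserving thick subcategories.

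The forward step is immediate. For any $E\in D^b(X)$ the pushforward $\pi_{*}E$ lies in $D^b(\PP{n})=\langle \OO_{\PP{n}},\OO_{\PP{n}}(1),\ldots,\OO_{\PP{n}}(n)\rangle$ by the Beilinson collection. Applying $\pi^{*}$ and using $\pi^{*}\OO_{\PP{n}}(j)=\OO_X(j)$ therefore places $\pi^{*}\pi_{*}E$ in the thick closure of $\{\OO_X(j)\}_{j=0}^{n}$, namely $\langle G\rangle$.

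The main obstacle, and the heart of the matter, is the descent: concluding $E\in\langle G\rangle$ from $\pi^{*}\pi_{*}E\in\langle G\rangle$. The plan is to use the counit triangle $\pi^{*}\pi_{*}E\to E\to C\to \pi^{*}\pi_{*}E[1]$ and show $C\in\langle G\rangle$, whence $E\in\langle G\rangle$ by the two-out-of-three property of the thick subcategory. A preparatory observation is that every twist $\OO_X(k)$ for $k\in\ZZ$ already lies in $\langle G\rangle$: pull back to $X$ the Koszul complex of $n+1$ general linear forms on $\PP{n}$, obtaining $0\to\OO_X(-n-1)\to\OO_X(-n)^{\oplus(n+1)}\to\cdots\to\OO_X\to 0$ (exact by flatness of $\pi$), and iterate in both directions. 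With this in hand, $C$ is controlled by a Noetherian induction on the support of $\HC^{\bullet}(C)$: restriction to a hyperplane section of $X$ reduces $\dim X$ and invokes an inductive hypothesis (the base case $\dim X=0$ being trivial). An alternative and more direct route, available whenever $d:=\deg\pi$ is invertible in $k$, is to exhibit $E$ as a direct summand of $\pi^{*}\pi_{*}E$: the normalized trace $\tfrac{1}{d}\operatorname{tr}\colon\pi_{*}\OO_X\to\OO_{\PP{n}}$ retracts the unit of the finite flat $\OO_{\PP{n}}$-algebra $\pi_{*}\OO_X$, which by functoriality of the adjunction yields a retraction of the counit for every $E$; thickness of $\langle G\rangle$ under direct summands then concludes.
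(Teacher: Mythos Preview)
The paper does not give a proof here; the theorem is quoted from Orlov without argument. So the only question is whether your argument is valid, and the descent step has a genuine gap.

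Your ``direct route'' via the trace is incorrect. The normalised trace splits the \emph{unit} $\eta_F\colon F\to\pi_*\pi^*F\cong F\otimes\pi_*\OO_X$ for $F\in D^b(\PP{n})$, but it does not produce a section of the counit $\epsilon_E\colon\pi^*\pi_*E\to E$ on $X$ for general $E$; no ``functoriality of the adjunction'' transports the splitting from the base to the total space. Concretely, take $X\subset\PP{2}$ a smooth conic, so $n=1$, $X\cong\PP{1}$, $\OO_X(1)\cong\OO_{\PP{1}}(2)$, and $\deg\pi=2$ is invertible in $k$. For $E=\OO_{\PP{1}}(1)$ one finds $\pi_*E\cong\OO_{\PP{1}}^{\oplus2}$, hence $\pi^*\pi_*E\cong\OO_X^{\oplus2}$; since $\Hom(\OO_{\PP{1}}(1),\OO_{\PP{1}})=0$, the object $E$ is \emph{not} a direct summand of $\pi^*\pi_*E$. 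The same example defeats your Noetherian-induction route: the cone of $\epsilon_E$ is $\OO_{\PP{1}}(-1)[1]$, which has full support, so there is no dimension drop to induct on (and iterating the construction produces $\OO_{\PP{1}}(-3)[2],\OO_{\PP{1}}(-5)[3],\ldots$, which never terminates).

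A repair close to your setup is to abandon the counit and argue via weak generation. Since $\pi$ is affine and surjective, $\pi_*$ is conservative on $D(\Qcoh)$; the adjunction $\RHom_X(G,T)\cong\RHom_{\PP{n}}\big(\bigoplus_j\OO_{\PP{n}}(j),\pi_*T\big)$ then shows $G^\perp=0$ in $D(\Qcoh X)$, and Neeman's theorem (a compact weak generator of a compactly generated triangulated category classically generates the compact objects) yields $\langle G\rangle=\Perf(X)=D^b(X)$.
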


\subsection{Existence for Elliptic Curves}\label{subsection: existence for curves}
In this \cref{subsection: existence for curves}, we will show \cref{Theorem in introduction: curve Ulrich object}.

\begin{prop}
\label{proposition: weak is classical for elliptic curve}
    Let $E$ be an elliptic curve and $G\in D^b(E)$.
    If $G$ weakly generates $D^b(E)$, $G$ classically generates $D^b(E)$.
\end{prop}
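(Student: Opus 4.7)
The plan is to invoke the Neeman--Bondal--van den Bergh criterion (\cite[\href{https://stacks.math.columbia.edu/tag/09SR}{Tag 09SR}]{stacks-project}): in a compactly generated triangulated category $\DC$, a compact object $G$ classically generates $\Dcpt$ if and only if $G$ weakly generates $\DC$. Taking $\DC = D(\Qcoh E)$, whose compact objects coincide with $D^b(E)$ since $E$ is smooth and projective, the proposition reduces to the implication: if $G \in D^b(E)$ weakly generates $D^b(E)$, then $G$ weakly generates $D(\Qcoh E)$.

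Two features of the curve $E$ drive the upgrade. First, with the classical generator $\OO_E \oplus \OO_E(1)$ of $D^b(E)$ in hand (Orlov, \cref{theorem: explicit form of splitting generator G}), \cite[\href{https://stacks.math.columbia.edu/tag/09SN}{Tag 09SN}]{stacks-project} writes any $K \in D(\Qcoh E)$ as $K = \hocolim_n K_n$ where each $K_n$ and each $\Cone(K_n \to K_{n+1})$ is a direct sum of shifts of $\OO_E \oplus \OO_E(1)$; in particular each $K_n$ lies in $D^b(E)$. Second, $\Coh(E)$ --- and in fact $\Qcoh(E)$ --- has global dimension one, so every object of $D(\Qcoh E)$ is formal and splits as a direct sum of shifts of its cohomology sheaves.

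Given $\RHom(G, K) = 0$, compactness of $G$ commutes $\RHom(G, -)$ with coproducts, so the defining triangle $\bigoplus_n K_n \xrightarrow{1-j} \bigoplus_n K_n \to K$ forces $\RHom(G, 1-j)$ to be an isomorphism in $D(k)$. The engine of the argument is a conservativity lemma: if $f\colon A \to B$ in $D(\Qcoh E)$ has $\Cone(f)$ a direct sum of compact objects $E_i \in D^b(E)$, then $\RHom(G, f)$ is an isomorphism if and only if $f$ is, since $\RHom(G, \bigoplus E_i) = \bigoplus \RHom(G, E_i)$ and weak generation of $G$ in $D^b(E)$ forces each $E_i = 0$ as soon as the corresponding $\RHom$ vanishes. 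Combined stepwise with the formality of $K$ and of the $K_n$, the plan is to conclude $K = 0$.

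The main obstacle is implementing this last step cleanly. The delicate point is that the global map $1-j$ has cone $K$ which is a priori only formal, not a direct sum of compact pieces, so the conservativity lemma does not apply at the outermost level. I would circumvent this by reducing via formality to the statement that a quasi-coherent sheaf $F$ on $E$ with $\RHom(G, F) = 0$ must vanish. Writing $F = \varinjlim_\alpha F_\alpha$ as the filtered union of its coherent subsheaves, left exactness of $\Hom(G_j, -)$ kills every $\Hom(G_j, F_\alpha)$ via the injection $\Hom(G_j, F_\alpha) \hookrightarrow \Hom(G_j, F) = 0$. Controlling the remaining $\Ext^1$-classes --- which can individually survive while dying in the filtered colimit --- requires pigeon-holing through the finitely many nonzero cohomology sheaves $G_j$ of $G$ and exploiting the $1$-Calabi--Yau duality $\Ext^1(G_j, F_\alpha) \cong \Hom(F_\alpha, G_j)^{\vee}$ on $D^b(E)$ (which is where the elliptic hypothesis $\omega_E \cong \OO_E$ enters essentially) to trade the surviving $\Ext^1$-classes for nonzero $\Hom$-classes into the finitely many $G_j$, ultimately forcing each $F_\alpha$, and therefore $F$ itself, to vanish.
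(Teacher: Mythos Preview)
Your route is genuinely different from the paper's. The paper argues in three lines: write $G$ (via formality and Atiyah's classification) as a finite direct sum of semistable sheaves $G_i$; observe that if all $G_i$ share one slope then $G$ already fails to be a \emph{weak} generator of $D^b(E)$ (any stable sheaf of that slope not among the Jordan--H\"older factors is right-orthogonal to $G$); hence two distinct slopes occur, and then \cite[Lemma~6.7]{ballard_favero_katzarkov_2012_orlov_spectra_bounds_and_gaps} gives classical generation outright. No passage to $D(\Qcoh E)$ is needed.

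Your reduction via the Neeman--Bondal--van den Bergh criterion and formality of $D(\Qcoh E)$ is sound, and it correctly isolates the statement ``$F\in\Qcoh(E)$ with $\RHom(G,F)=0$ forces $F=0$''. (A small slip: in the Stacks construction \cite[Tag~09SN]{stacks-project} the $K_n$ are \emph{arbitrary} direct sums of shifts of the chosen compacts, so they need not lie in $D^b(E)$; fortunately your formality reduction bypasses this.) The real gap is the last paragraph. Serre duality does give, for each nonzero coherent $F_\alpha\subset F$, a nonzero map $F_\alpha\to G_j$ for some $j$; but you never explain how ``pigeon-holing through the finitely many $G_j$'' converts this into $F_\alpha=0$. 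Each $G_j$ can receive nonzero maps from infinitely many distinct $F_\alpha$, and the vanishing $\varinjlim_\alpha\Ext^1(G_j,F_\alpha)=0$ only says that every map $F_\beta\to G_j$ eventually restricts to zero on a given $F_\alpha$ --- it does not bound anything finitely. I do not see how to close this without feeding in precisely the slope dichotomy (one slope $\Rightarrow$ not a weak generator; two slopes $\Rightarrow$ classical generator) that the paper uses, at which point the detour through $D(\Qcoh E)$ has bought nothing. As written, the proposal is a correct reduction followed by an unproved claim.
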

\begin{proof}[\proofname\ of \cref{proposition: weak is classical for elliptic curve}]
    We may assume $G$ is the direct sum of semistable sheaves $G_i$ $(1\le i\le m)$ from Atiyah's classification. If $G$ is torsion or semistable, it is not a generator. 
    Thus, we may assume $m\ge2$ and $\mu(G_1)\neq\mu(G_2)$.
    Then $G$ must be a classical generator (see the proof of \cite[Lemma 6.7.]{ballard_favero_katzarkov_2012_orlov_spectra_bounds_and_gaps} for instance).
\end{proof}

\begin{thm}\label{theorem: curve case existence of Ulrich object}
    Let $\Phi_{|\OO_E(1)|}: E\hookrightarrow\PP{N}$ be an elliptic curve with an embedding.
    Then, an Ulrich object exists on $(E, \OO_E(1))$.
\end{thm}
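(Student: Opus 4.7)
The plan is to apply the Ulrich object characterization in dimension one. Since $\dim E = 1$, \cref{definition: Ulrich object} says that an object $F \in D^b(E)$ is Ulrich precisely when $H^{i}(F(-1)) = 0$ for all $i \in \ZZ$; equivalently, $\RHom(\OO_E(1), F) = 0$, i.e.\ $F \in \langle \OO_E(1) \rangle^{\perp}$. Hence the existence of a nonzero Ulrich object on $(E, \OO_E(1))$ is equivalent to $\OO_E(1)$ failing to be a weak generator of $D^b(E)$.

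Next, I would invoke \cref{proposition: weak is classical for elliptic curve}, which states that on an elliptic curve weak and classical generators coincide. It therefore suffices to show that the single line bundle $\OO_E(1)$ is not a classical generator of $D^b(E)$. This follows from the argument in the proof of that proposition: by Atiyah's classification, any classical generator of $D^b(E)$, decomposed into a sum of semistable summands, must contain pieces of at least two distinct slopes; a single $\mu$-stable line bundle cannot do this. One may alternatively give a Chern-character check: every object of $\langle \OO_E(1) \rangle$ has Chern character proportional to $(1, \deg \OO_E(1))$, so $\OO_E$ itself lies outside this thick subcategory. Combining this with the first paragraph produces a nonzero object in $\langle \OO_E(1) \rangle^{\perp}$, which by definition is an Ulrich object.

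The main (and only) hurdle is the reduction from "$\OO_E(1)$ is not a weak generator" to "$\OO_E(1)$ is not a classical generator"; this step is supplied by \cref{proposition: weak is classical for elliptic curve} and leans on the fact that $D^b(E)$ has global dimension $1$ (so every object splits as a direct sum of shifts of coherent sheaves) together with Atiyah's classification of indecomposable bundles. Once this reduction is available, non-generation of a single stable line bundle is essentially formal. As a sanity check one can produce an explicit witness: pick $L \in \Pic^{d}(E)$ with $d = \deg \OO_E(1)$ and $L \not\cong \OO_E(1)$; then $L(-1)$ is a nontrivial degree-zero line bundle, so $h^{0}(L(-1)) = h^{1}(L(-1)) = 0$, confirming that $L$ is already an Ulrich line bundle in $\langle \OO_E(1) \rangle^{\perp}$.
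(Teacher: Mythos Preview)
Your argument is correct and follows essentially the same route as the paper: reduce existence of an Ulrich object to $\langle \OO_E(1)\rangle^{\perp}\neq 0$, then use \cref{proposition: weak is classical for elliptic curve} to reduce weak generation to classical generation, and finally check that the single line bundle $\OO_E(1)$ cannot classically generate $D^b(E)$. The only cosmetic difference is in this last step: the paper compares ranks of numerical Grothendieck groups ($\rk K_0(\langle\OO_E(1)\rangle)=1<2=\rk K_0(D^b(E))$), whereas you phrase the same obstruction via the Chern character (or via the two-slope remark from the proof of \cref{proposition: weak is classical for elliptic curve}); these are the same argument in different clothing. Your explicit witness $L\in\Pic^d(E)$ with $L\not\cong\OO_E(1)$ is a nice addition not present in the paper.
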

\begin{proof}[\proofname\ of \cref{theorem: curve case existence of Ulrich object}]
    Note that $\OO(1)\in D^b(E)$ cannot classically generate the whole derived cateogy $D^b(E)$.
    Indeed, as $\rk K_0(\langle \OO(1) \rangle) = 1$ by the stability of $\OO(1)$, 
    comparing the rank of the K-group of $\langle \OO(1) \rangle$ and $D^b(E)$, 
    we have 
    \[
    2 = \rk K_0(D^b(E)) > \rk K_{0}(\langle\OO_{E}(1)\rangle) = 1.
    \]
    Thus, $\OO(1)$ cannot classically generate the whole derived category.
    Finally, \cref{proposition: weak is classical for elliptic curve} shows the claim.
\end{proof}

\section{Remarks on Bridgeland Stability of Ulrich Objects}\label{section: Bridgeland stability of Ulrich objects}
The last section is devoted to some remarks on the relationship between the Ulrich objects and stabilities in the derived category, namely the Bridgeland stabilities.

As stated in \cref{subsection: Preliminaries on Ulrich Bundles}, the Ulrich bundles behave well for the stabilities of sheaves. Thus, it predicts that the Ulrich objects behave well for Bridgeland stabilities. 
However, in general, investigating the Bridgeland stabilities is not easy. 

\subsection{Preliminaries on Bridgeland Stability Conditions}\label{subsection: preliminaries on Bridgeland stability condition}

\begin{dfn}\label{definition: stability conditions on triangulated categories}
    Let $\DC$ be a triangulated category, $\AC$ an abelian category 
    and $K_{\mathrm{num}}(\AC)$ and $K_{\mathrm{num}}(\AC)$ be these numerical Grothendieck group respectively.
    \begin{itemize}
        \item A \emph{pre-stability function} on $\AC$ is a group homomorphism 
        $Z: K_{\mathrm{num}}(\AC) \to \CB$ such that for all $A\in\AC\setminus0$, $Z(A)\in \HB^{+} (= \HB \cup \RB_{<0})$. 
        \item A \emph{stability function} $Z$ on $\AC$ is a pre-stability function on $\AC$ which satisfies the Harder-Narasimhan property.
        \item A \emph{Bridgeland stability condition} $\sigma$ on $\DC$ is a pair $(\AC, Z)$, where $\AC\subset \DC$ is an abelian category and $Z$ is a stability function on $\AC$.
    \end{itemize}
    Denote the set of Bridgeland stability conditions on $\DC$ by $\Stab(\DC)$.
    Also, for a smooth projective variety $X$, write $\Stab(X)\coloneqq\Stab(D^b(X))$ for simplicity.
    Note that in \cite{bridgeland_2007_stability_conditions_on_triangulated_categories} Bridgeland have shown that $\Stab(X)$ become a complex manifold with a suitable topology.
\end{dfn}

\begin{prop}\label{proposition: Divisorial stability condition}
    Let $X$ be a surface, $D, H$ be an $\ RB$-divisor such that $H$ is ample.
    Define an abelian category by
        \[
    \mathcal{A}_{D,H} = \{F\in D^{b}(X)\ \mid \ H^{i}(F)= 0\ \text{for}\ i\neq -1,0,\ 
    H^{-1}(F)\in\mathcal{F}_{D,H}, H^{0}(F)\in \mathcal{T}_{D,H}
    \}, 
    \]
    where the pair $(\mathcal{F}_{D,H},\mathcal{T}_{D,H})$ is the torsion pair defined by 
    $$\mathcal{F}_{D,H} = \{ E\in \Coh(X)\ \mid\ E:\text{torsion free and}\ \forall0\neq F\subset E: \mu_{H}(F)\le D.H\},\text{and}$$
    $$\mathcal{T}_{D,H} = \{ E\in \Coh(X)\ \mid\ \forall E\twoheadrightarrow F\neq 0 
    \text{ torsion free}:\ \mu_{H}(F) > D.H\}, $$
    where $\mu_{H}(E) = \frac{c_1(E).H}{\rk(E)H^{2}}$ is the slope function. 
    For $F\in\AC_{D, H}$, set 
    \[
    \displaystyle Z_{D,H}(F) = -\int \exp(-(D+iH))\ch(F).
    \]
    
    Then, there exists the injection:
    \begin{align*}
        N^{1}(X)\times\Amp(X)&\longrightarrow \Stab(X)\\
        (D, H)\hspace{10mm}&\longmapsto\sigma_{D,H} = (Z_{D,H}, \mathcal{A}_{D,H}).
    \end{align*}
\end{prop}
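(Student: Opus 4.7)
The plan is to follow the standard tilting construction of Bridgeland stability conditions on surfaces, due to Bridgeland in the $K3$ case and Arcara-Bertram in general. First I would verify that $(\FC_{D,H},\TC_{D,H})$ defines a torsion pair on $\Coh(X)$: using the $\mu_H$-Harder-Narasimhan filtration, every coherent sheaf decomposes canonically into a torsion part plus the HN factors above the threshold slope (belonging to $\TC_{D,H}$) and the remaining low-slope torsion-free part (belonging to $\FC_{D,H}$), with no nontrivial morphisms from $\TC_{D,H}$ to $\FC_{D,H}$ by slope. The tilting theorem of Happel-Reiten-Smal{\o} then guarantees that $\AC_{D,H}$, viewed as the extension-closed subcategory of $D^b(X)$ generated by $\TC_{D,H}$ and $\FC_{D,H}[1]$, is the heart of a bounded t-structure; in particular $\AC_{D,H}$ is abelian and $K_{\mathrm{num}}(\AC_{D,H}) = K_{\mathrm{num}}(X)$, so $Z_{D,H}$ descends to a homomorphism on it.

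The main step, and the hardest one, is verifying that $Z_{D,H}$ is a pre-stability function on $\AC_{D,H}$: $Z_{D,H}(F)\in\HB\cup\RB_{<0}$ for every nonzero $F\in\AC_{D,H}$. Every such $F$ sits in a short exact sequence
\[
0\to H^{-1}(F)[1]\to F\to H^0(F)\to 0
\]
in $\AC_{D,H}$, so it suffices to check the claim on $T\in\TC_{D,H}$ and on $G[1]$ with $G\in\FC_{D,H}$. Expanding $Z_{D,H}$ in Chern characters shows that the imaginary part is essentially $H\cdot\ch_1(F)-(D\cdot H)\rk(F)$ on sheaves (with the opposite sign on shifts), so non-negativity on $\TC_{D,H}$ and on $\FC_{D,H}[1]$ follows directly from the slope conditions defining the torsion pair. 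The boundary case, where the imaginary part vanishes, is where the surface geometry enters essentially: the Bogomolov-Gieseker inequality $\ch_1^2\ge 2\rk\cdot\ch_2$ for $\mu_H$-semistable sheaves, combined with the Hodge index theorem applied to $\ch_1-\rk D$, forces the real part to be strictly negative. Torsion sheaves on curves and zero-dimensional sheaves are handled directly from the definition of $Z_{D,H}$.

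Finally I would establish the Harder-Narasimhan property and injectivity. For HN, the cleanest route is to verify the support property with respect to the discriminant form $Q(v)=\ch_1(v)^2-2\rk(v)\ch_2(v)$, using Bogomolov-Gieseker and Hodge index, and then invoke the standard criterion that the support property together with positivity of the central charge implies HN filtrations; alternatively, one checks Noetherianity of $\AC_{D,H}$ directly from boundedness of $\mu_H$-semistable sheaves with bounded discriminant and runs Bridgeland's original HN argument. For injectivity of $(D,H)\mapsto\sigma_{D,H}$, the central charge $Z_{D,H}$ alone recovers the pair: its value on $[\OC_X]$ yields $\tfrac{H^2-D^2}{2}-i(D\cdot H)$, and its values on $[\OC_C]$ for curves $C$ whose classes span a basis of $N^1(X)_{\RB}$ isolate $H\cdot C$ and $D\cdot C$, so nondegeneracy of the intersection pairing pins down $D$ and $H$ uniquely in $N^1(X)_{\RB}$.
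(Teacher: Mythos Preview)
The paper does not prove this proposition: it is stated in Section~\ref{subsection: preliminaries on Bridgeland stability condition} as a background result from the literature (the construction is due to Bridgeland for $K3$ surfaces and to Arcara--Bertram for arbitrary smooth surfaces) and no argument is supplied. Your outline is correct and is exactly the standard route those references take: tilt at the slope threshold, apply Happel--Reiten--Smal{\o}, verify positivity of $Z_{D,H}$ on the tilted heart via Bogomolov--Gieseker and Hodge index, and obtain HN filtrations through the support property. There is therefore nothing to compare against; your proposal simply fills in what the paper leaves to citation.
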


\begin{dfn}\label{definition: geometric stability condition}
   Let $\DC$ be a triangulated category.
   A stability condition $\sigma =(Z, \AC)$ is called \emph{geometric} if 
   skyscraper sheaf $k(x)$ is $\sigma$-semistable of same phase for any $x\in X$. 
\end{dfn}

\begin{prop}\label{proposition: geometric and divisorial}
    Let $X$ be a smooth projective surface over $\CB$.
    Then, the divisorial stability condition $\sigma_{D, H}$ is geometric such that the phase of the skyscraper is $1$.
    Conversely, if $\sigma\in\Stab(X)$ satisfies that $k(x)$ is $\sigma$-stable of phase $1$ for any $x\in X$, then there exists a pair $(D, H)\in N^{1}(X)_{\RB}\times\Amp(X)_{\RB}$ such that 
    $\sigma = \sigma_{D, H}$.
\end{prop}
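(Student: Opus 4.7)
The plan is to split the proof into the two implications and verify each directly. For the forward direction, I will check that $\sigma_{D,H}$ makes every skyscraper $k(x)$ stable of phase exactly $1$: first, $k(x) \in \AC_{D,H}$ because $\HC^{-1}(k(x)) = 0 \in \FC_{D,H}$ and $\HC^{0}(k(x)) = k(x) \in \TC_{D,H}$, the latter because $k(x)$ has no nonzero torsion-free quotient; second, $\ch(k(x)) = (0,0,[\mathrm{pt}])$ kills all but the constant term of $\exp(-(D+iH))$, so $Z_{D,H}(k(x)) = -1$ has phase $1$; third, any subobject $A \hookrightarrow k(x)$ in $\AC_{D,H}$ gives, via the long exact sequence of coherent cohomology, $\HC^{-1}(A) = 0$ and $\HC^{0}(A) \hookrightarrow k(x)$, so $A \cong 0$ or $A \cong k(x)$, which already yields stability.

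For the converse, assume $\sigma = (Z,\AC)$ is geometric with $k(x)$ stable of phase $1$ for every $x \in X$. I would proceed in two stages. \emph{Stage 1, heart reconstruction:} using the $\Hom$-vanishings $\Hom^{<0}(k(x), E) = 0$ and $\Hom^{>0}(E, k(x)) = 0$ for $E \in \AC$ (which follow from $k(x)$ being stable of phase $1$ in the heart), I would show that the cohomology sheaves satisfy $\HC^i(E) = 0$ outside $i \in \{-1,0\}$ and that $\HC^{-1}(E)$ is torsion-free, so $\AC$ arises by tilting $\Coh(X)$ at a torsion pair $(\mathcal{T},\mathcal{F})$. \emph{Stage 2, extracting $(D,H)$:} since for a surface $K_{\mathrm{num}}(X) \otimes \RB$ is generated by $\rk$, $c_1$, and $\ch_2$, the central charge $Z$ is an $\RB$-linear map to $\CB$ satisfying $Z(k(x)) = -1$; writing
\[
Z_{D,H}(F) = \rk F \cdot \tfrac{H^2 - D^2}{2} + c_1(F)\cdot D - \ch_2(F) + i\bigl((c_1(F) - \rk F \cdot D)\cdot H\bigr),
\]
the imaginary part of $Z$ is a linear functional that must be proportional to $H \cdot (-)$ for some class $H$, and positivity of $\Image Z$ on $\AC$ forces $H$ to be ample. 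The real part then pins down $D$ uniquely. Finally, comparing $(\mathcal{T},\mathcal{F})$ with $(\TC_{D,H}, \FC_{D,H})$ via the positivity condition $\Image Z > 0$ on $\mathcal{T}$ yields equality of the torsion pairs, hence $\sigma = \sigma_{D,H}$.

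The main obstacle is Stage 1 of the converse: while it is expected that the heart is a tilt of $\Coh(X)$, making this argument precise requires the support property of $\sigma$ together with a careful analysis of torsion subsheaves of $\HC^{-1}(E)$ and torsion-free quotients of $\HC^{0}(E)$, tested against the skyscrapers $k(x)$. Producing a genuine slope threshold of the form $D \cdot H$ with $D \in N^{1}(X)_{\RB}$ and $H \in \Amp(X)_{\RB}$ -- rather than just some abstract torsion pair on $\Coh(X)$ -- is where most of the technical work concentrates, since one must work with $\RB$-divisors and cannot reduce to testing only line bundles.
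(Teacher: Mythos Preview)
The paper does not prove this proposition at all; it is stated in the preliminary \cref{subsection: preliminaries on Bridgeland stability condition} as a background fact without proof or citation, so there is no ``paper's own proof'' to compare your attempt against. The result is standard (Bridgeland for K3 surfaces, Arcara--Bertram for general surfaces), and your outline follows that standard route.

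On the forward direction: your argument that $k(x)$ is simple in $\AC_{D,H}$ has a small gap. From a short exact sequence $0\to A\to k(x)\to Q\to 0$ in $\AC_{D,H}$ the cohomology long exact sequence gives $\HC^{-1}(A)=0$ and
\[
0 \to \HC^{-1}(Q) \to \HC^{0}(A) \to k(x) \to \HC^{0}(Q) \to 0,
\]
so $\HC^{0}(A)\to k(x)$ need not be injective; its kernel is $\HC^{-1}(Q)\in\FC_{D,H}$. To close the gap, note that $\HC^{-1}(Q)$ and $\HC^{0}(A)$ have the same rank and the same $c_{1}$ (they differ by $[k(x)]$ in $K$-theory); combining $\mu_{\max}(\HC^{-1}(Q))\le D\!\cdot\!H$ with $\mu_{\min}$ of the torsion-free part of $\HC^{0}(A)>D\!\cdot\!H$ forces $\HC^{-1}(Q)=0$, after which your conclusion $A\cong 0$ or $A\cong k(x)$ follows.

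On the converse: your two-stage plan is the correct one, and you are right that Stage~1 (showing the heart is a tilt of $\Coh(X)$) is where the work is. One further caveat: fixing only the \emph{phase} of $k(x)$ to be $1$ leaves the magnitude $|Z(k(x))|$ free, so as stated the converse determines $\sigma$ only up to a positive real rescaling of $Z$; the usual formulation either imposes $Z(k(x))=-1$ or asserts equality up to the $\widetilde{\GL}^{+}_{2}(\RB)$-action.
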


\subsection{First Examples}\label{subsection: Bridgeland stability first examples}

\begin{eg}\label{example: the case of curve Bridgeland stability of Ulrich object}
    Let $C$ be a smooth projective curve of genus $g>0$ and $\sigma$ be a Bridgeland stability condition on $D^b(C)$.
    An Ulrich object $E$ is trivially semistable from \cref{Theorem: Ulrih bundles are Gieseker stable}.
\end{eg}

\subsubsection{The Projective Plane}\label{subsubsection: Bridgeland stability of projective plane}
From \cref{proposition: Divisorial stability condition}, the divisorial stability condition in $\Stabdiv(\PP{2})$ is determined by the pair in the upper half-plane $(s,t)\in\HB$.
Denote $\sigma_{sH,tH}$ by $\sigma_{s,t}$.

\begin{thm}[{\cite[Theorem 5.4.]{arcara_miles_2016_bridgeland_stability_of_line_bundles_on_surfaces}}]\label{Theorem: Bridgeland stability of structure sheaf for surface}
    Let $S$ be a surface that does not have any negative self-intersection curve, 
    and $\sigma=(Z, \AC)\in\Stabdiv(X)$ such that $\OO_S\in\AC$.
    Then, $\OO_S$ is stable with respect to $\sigma$.
\end{thm}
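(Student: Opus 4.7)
My approach is by contradiction: suppose there exists a proper subobject $F\hookrightarrow\OO_S$ in the tilted heart $\mathcal{A}_{D,H}$ with $\phi_{\sigma}(F)\geq\phi_{\sigma}(\OO_S)$, and I aim to derive an impossible numerical inequality from the hypothesis that $S$ has no negative self-intersection curve.

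First, I would pin down the shape of such an $F$. Applying the long exact sequence of sheaf cohomology to the distinguished triangle $F\to\OO_S\to Q$ extending the monomorphism $F\hookrightarrow\OO_S$ in $\mathcal{A}_{D,H}$, and using that $\OO_S$ sits in degree zero together with $\mathcal{T}_{D,H}\cap\mathcal{F}_{D,H}=0$, one forces $\mathcal{H}^{-1}(F)=0$, so that $F$ is an honest torsion-free rank-one sheaf in $\mathcal{T}_{D,H}$ and $F\to\OO_S$ is a sheaf inclusion. Consequently $F\cong I_Z\otimes\OO_S(-D')$ for some zero-dimensional $Z\subset S$ and effective divisor $D'\geq 0$. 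The no-negative-curve hypothesis then supplies the key numerical input $(D')^2\geq 0$ for every effective $D'$, obtained by expanding $D'=\sum a_iC_i$ and using both $C_i^2\geq 0$ and $C_i\cdot C_j\geq 0$.

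Next I would compute the central charges. Plugging $\ch(F)=(1,-D',\tfrac{1}{2}(D')^2-\ell(Z))$ and $\ch(\OO_S)=(1,0,0)$ into $Z_{D,H}(E)=-\int_{S}e^{-(D+iH)}\ch(E)$ one obtains
\[
\mathrm{Im}\,Z(\OO_S)=-D\cdot H,\qquad\mathrm{Im}\,Z(F)=-(D+D')\cdot H,
\]
both positive because $\OO_S,F\in\mathcal{T}_{D,H}$ (the former forcing in particular $D\cdot H<0$), together with explicit formulas for the real parts. Rewriting the destabilizing inequality in its cross-product form
\[
\mathrm{Im}\,Z(\OO_S)\cdot\mathrm{Re}\,Z(F)\leq\mathrm{Re}\,Z(\OO_S)\cdot\mathrm{Im}\,Z(F),
\]
and cancelling common terms should reduce it to a polynomial inequality in the intersection numbers $H^2,\,D^2,\,D\cdot H,\,(D')^2,\,D\cdot D',\,D'\cdot H$ and the length $\ell(Z)$.

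The main obstacle is the resulting sign analysis: one must verify that, under the constraints $H^2>0$, $D\cdot H<0$, $D'\cdot H\geq 0$, $(D')^2\geq 0$, $\ell(Z)\geq 0$, and the Hodge index inequalities among $D,D',H$, the reduced expression is in fact strictly positive. The pure-ideal case $D'=0$ is easy, since the inequality collapses to $(-D\cdot H)\,\ell(Z)>0$ whenever $\ell(Z)>0$; the pure-twist case $\ell(Z)=0$ is more delicate, as one must exploit $(D')^2\geq 0$ together with Hodge index to dominate the mixed cross term $D\cdot D'$ against the coefficient $-D\cdot H$. Handling both cases uniformly and ruling out boundary equality yields the required strict contradiction, proving $\OO_S$ is $\sigma$-stable.
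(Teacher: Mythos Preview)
The paper does not supply its own proof of this statement; it is quoted from Arcara--Miles with no argument given. So I can only assess your outline on its own merits.

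There is a genuine gap at the first reduction. From the long exact cohomology sequence you correctly obtain $\mathcal{H}^{-1}(F)=0$, so that $F$ is a sheaf lying in $\mathcal{T}_{D,H}$. However, it does \emph{not} follow that $F\to\OO_S$ is injective in $\Coh(S)$, nor that $\rk F=1$. A monomorphism in the tilted heart $\mathcal{A}_{D,H}$ need not be a monomorphism of sheaves: the sheaf kernel of $F\to\OO_S$ is exactly $\mathcal{H}^{-1}(Q)$, and the condition for $F\hookrightarrow\OO_S$ to be monic in $\mathcal{A}_{D,H}$ is merely $\mathcal{H}^{-1}(Q)\in\mathcal{F}_{D,H}$, not $\mathcal{H}^{-1}(Q)=0$. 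Concretely, on $\PP{2}$ with $D=sH$ and, say, $s=-2$, a generic map $\OO(-1)^{\oplus 2}\to\OO$ has sheaf kernel $\OO(-2)\in\mathcal{F}_{D,H}$ and hence exhibits a rank-two subobject of $\OO$ in $\mathcal{A}_{D,H}$. Your invocation of $\mathcal{T}_{D,H}\cap\mathcal{F}_{D,H}=0$ does not help here: that only says $\mathcal{H}^{-1}(Q)$ cannot simultaneously lie in $\mathcal{T}_{D,H}$, which is not what you need.

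Consequently your intersection-theoretic computation addresses only rank-one destabilizers. Disposing of higher-rank subobjects is a separate, nontrivial step---this is typically where one feeds the Bogomolov--Gieseker inequality for the $\sigma$-semistable maximal destabilizer into the phase inequality, together with Hodge index---while the no-negative-curves hypothesis enters precisely in the rank-one analysis you sketched. Without that missing piece the argument is incomplete.
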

In particular, the structure sheaf $\OO_{\PP{2}}$ of the projective plane $\PP{2}$ is 
Bridgeland stable for any $\sigma_{sH, tH}$ $(s<0)$. 

The dual version of the \cref{Theorem: Bridgeland stability of structure sheaf for surface} is the following:
\begin{prop}[{\cite[Proposition 6.3.]{arcara_miles_2016_bridgeland_stability_of_line_bundles_on_surfaces}}]\label{proposition: Bridgeland stability of structure sheaf for shifted surface}
    Let $S$ be a surface that does not have any negative self-intersection curve, 
    and $\sigma=(Z, \AC)\in\Stabdiv(X)$ such that $\OO_S[1]\in\AC$.
    Then, $\OO_S[1]$ is stable with respect to $\sigma$.
\end{prop}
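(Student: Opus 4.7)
The plan is to prove this as a formal consequence of the preceding Theorem via duality on $D^b(S)$. The key functor is the shifted derived dual
\[
\mathbb{D} := \RHom(-,\OO_S)[1],
\]
which, since $S$ is smooth projective, is an anti-autoequivalence of $D^b(S)$ satisfying $\mathbb{D}^2 \simeq \id$ by biduality for perfect complexes, and which manifestly sends $\OO_S$ to $\OO_S[1]$.

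The first step is to verify that $\mathbb{D}$ acts on the divisorial locus $\Stabdiv(S) \subset \Stab(S)$, sending $\sigma_{D,H}$ to a divisorial stability condition of the form $\sigma_{-D,H}$ (up to the standard $\widetilde{\GL}_2^+(\RB)$-action that does not affect the set of (semi)stable objects). This amounts to two checks: first, that the Chern-character identity $\ch_i(\mathbb{D}(E)) = (-1)^{i+1}\ch_i(E)$ converts $Z_{D,H}(\mathbb{D}(E))$ into $Z_{-D,H}(E)$ up to a unimodular rotation; second, that $\mathbb{D}$ carries the torsion pair $(\FC_{D,H}, \TC_{D,H})$ to $(\TC_{-D,H}, \FC_{-D,H})$ at the level of cohomology sheaves, so that $\mathbb{D}(\AC_{D,H}) = \AC_{-D,H}[1]$. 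Under this matching, the hypothesis $\OO_S[1] \in \AC_{D,H}$ of the Proposition translates to $\OO_S \in \AC_{-D,H}$.

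Once the duality is pinned down, the Proposition becomes formal: the preceding Theorem, whose no-negative-self-intersection hypothesis on $S$ is intrinsic, yields that $\OO_S$ is $\sigma_{-D,H}$-stable, and since the anti-equivalence $\mathbb{D}$ preserves (semi)stability of non-zero objects, $\OO_S[1]=\mathbb{D}(\OO_S)$ is $\sigma_{D,H}$-stable. The main obstacle is the bookkeeping for $\mathbb{D}$ on $\Stabdiv(S)$, namely checking cleanly that dualized hearts and central charges really land on the parameter $-D$ and that the phase normalization aligns, so that a hypothetical destabilizing subobject of $\OO_S[1]$ in $\AC_{D,H}$ transports to a genuine destabilizing quotient of $\OO_S$ in $\AC_{-D,H}$. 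Should the duality route become unwieldy, a fallback is a direct case analysis of a purported destabilizing triangle $A\to \OO_S[1]\to B$ in $\AC_{D,H}$, whose long exact cohomology sequence
\[
0\to \HC^{-1}(A)\to \OO_S\to \HC^{-1}(B)\to \HC^0(A)\to 0,\qquad \HC^0(B)=0,
\]
forces $\HC^{-1}(A)$ to be an ideal subsheaf of $\OO_S$ lying in $\FC_{D,H}$; the absence of negative-self-intersection curves then contradicts the slope inequality required for destabilization, mirroring the original proof for $\OO_S$.
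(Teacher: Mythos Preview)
The paper does not supply its own proof; it cites Arcara--Miles and introduces the statement only as ``the dual version'' of the preceding Theorem, so there is no in-paper argument to compare against beyond that one-line hint.

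Your primary duality route contains a concrete false claim. The asserted equality $\mathbb{D}(\AC_{D,H}) = \AC_{-D,H}[1]$ fails already on skyscrapers: for $x\in S$ one has $k(x)\in\TC_{D,H}\subset\AC_{D,H}$, while on a smooth surface $\RHom(k(x),\OO_S)\cong k(x)[-2]$, so $\mathbb{D}(k(x))=k(x)[-1]$. Since $k(x)\in\AC_{-D,H}$ as well, $k(x)[-1]\notin\AC_{-D,H}[1]$. More generally, zero-dimensional torsion and the non--locally-free locus of torsion-free sheaves force $\mathbb{D}$ to spread objects of $\AC_{D,H}$ across two consecutive shifts of $\AC_{-D,H}$, so $\mathbb{D}(\AC_{D,H})$ is not a heart at all. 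The salvage is to work with slicings rather than hearts: the dual stability condition still has every $k(x)$ stable of a common phase, and \cref{proposition: geometric and divisorial} together with the $\widetilde{\GL}_2^+(\RB)$-action brings it back into $\Stabdiv(S)$; but then one must still verify that $\OO_S$ actually lies in the heart of the resulting divisorial condition, including the boundary case $D.H=0$ where $\OO_S$ and $\OO_S[1]$ switch sides of the tilt.

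Your fallback---the direct analysis of a would-be destabilising triangle via its cohomology long exact sequence---is precisely the Arcara--Miles argument and avoids all of this bookkeeping; it should be the main line rather than an afterthought.
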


Thus, the shifted structure sheaf $\OO_{\PP{2}}[1]$ is 
Bridgeland stable for any $\sigma_{sH, tH}$ $(s>0)$. 

As a corollary of the results in \cite{arcara_miles_2016_bridgeland_stability_of_line_bundles_on_surfaces}, we have the following statement:
\begin{prop}\label{proposition: Ulrich objects on P2 are Bridgeland semistable}
    Let $E$ be an Ulrich object on $\PP{2}$.
    Assume that there exists an integer $k\in\ZZ$ and a divisorial stability condition $\sigma_{s, t}=(Z_{s, t}, \AC_{s, t})$ on $D^b(\PP{2})$ such that $E[k]\in \AC_{s, t}$.
    Then, $E$ is $\sigma_{s, t}$-semistable.
\end{prop}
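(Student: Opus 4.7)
The plan is to use the explicit classification of Ulrich objects on $\PP{2}$ to reduce the claim to the Arcara--Miles stability results for $\OO_{\PP{2}}$ and $\OO_{\PP{2}}[1]$ already recorded in \cref{Theorem: Bridgeland stability of structure sheaf for surface} and \cref{proposition: Bridgeland stability of structure sheaf for shifted surface}.

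First, I take the identity as the finite linear projection $\pi : \PP{2} \to \PP{2}$ in \cref{Proposition: Ulrich object characterization}, which forces the Ulrich object $E$ to lie in $\langle \OO_{\PP{2}} \rangle \cong D^b(\Spec \CB)$. Hence $E$ is formal,
$$E \cong \bigoplus_{i \in \ZZ} \OO_{\PP{2}}^{\,m_i}[-i].$$
The hypothesis $E[k] \in \AC_{s,t}$ confines the cohomology sheaves of $E[k]$ to degrees $-1$ and $0$; setting $a := m_{k-1}$ and $b := m_k$ we obtain
$$E[k] \cong \OO_{\PP{2}}^{\,a}[1] \oplus \OO_{\PP{2}}^{\,b}.$$

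Next I would show that at most one of $a, b$ is nonzero. Membership of $E[k]$ in the tilted heart requires $\HC^{-1}(E[k]) = \OO_{\PP{2}}^{\,a} \in \FC_{s,t}$ and $\HC^{0}(E[k]) = \OO_{\PP{2}}^{\,b} \in \TC_{s,t}$. Because the only torsion-free subsheaf (respectively torsion-free quotient) of $\OO_{\PP{2}}$ is $\OO_{\PP{2}}$ itself, of slope $0$, these two memberships translate into the mutually exclusive inequalities $0 \le sH^2$ and $0 > sH^2$. Therefore $E[k]$ must be isomorphic to either $\OO_{\PP{2}}^{\,m}$ or $\OO_{\PP{2}}^{\,m}[1]$ for some $m \ge 0$.

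Finally, since $\PP{2}$ carries no curve of negative self-intersection, \cref{Theorem: Bridgeland stability of structure sheaf for surface} (respectively \cref{proposition: Bridgeland stability of structure sheaf for shifted surface}) shows that $\OO_{\PP{2}}$ (respectively $\OO_{\PP{2}}[1]$) is $\sigma_{s,t}$-stable whenever it lies in the heart $\AC_{s,t}$. A direct sum of copies of a single stable object has a well-defined common phase and is therefore $\sigma_{s,t}$-semistable; shifting preserves semistability, so $E$ is $\sigma_{s,t}$-semistable. The only delicate step is the dichotomy forcing $ab = 0$, but it is a routine torsion-pair computation rather than a real obstacle; everything else is a direct appeal to results already available in the paper.
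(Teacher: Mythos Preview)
Your argument is essentially the paper's own proof, spelled out in more detail: decompose $E$ via \cref{Proposition: Ulrich object characterization}, use the heart condition to force only one $m_i\neq 0$, and invoke Arcara--Miles. The only inaccuracy is the claim that $\OO_{\PP{2}}$ has no proper torsion-free subsheaf---it has plenty, e.g.\ $\OO_{\PP{2}}(-1)$ or ideal sheaves---but what you actually need is that $\OO_{\PP{2}}$ is $\mu$-semistable of slope $0$, so the maximal slope of a subsheaf and the minimal slope of a torsion-free quotient both equal $0$; this still yields the mutually exclusive inequalities and the argument goes through unchanged.
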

\begin{proof}[\proofname\ of \cref{proposition: Ulrich objects on P2 are Bridgeland semistable}]
    Arcara-Miles says $\OO_{\PP{2}}$ is $\sigma_{s, t}$-stable. 
    An Ulrich object on $\PP{2}$ is the form of 
    \[
    E\cong\bigoplus_{i\in\ZZ}\OO_{\PP{2}}^{n_i}[-i].
    \]
    by \cref{Proposition: Ulrich object characterization}.
    By the definition of $\AC_{s, t}$, 
    only one $n_i\neq 0$ if $E[k]\in\AC_{s, t}$. 
\end{proof}

\subsubsection{Quadrics}\label{subsubsection: Bridgeland stability quadrics}
The Spinor bundles on the quadric surface $Q^{2}\cong\PP{1}\times\PP{1}$ are 
$\OO(1, 0)$ and $\OO(0, 1)$ via the identification. 
Thus, Spinor bundles on $Q^2$ is $\sigma$-stable for $\sigma\in\Stab_{\mathrm{div}}(Q^2)$ up to shift from {\cite[Theorem 5.4. and Proposition 6.3.]{arcara_miles_2016_bridgeland_stability_of_line_bundles_on_surfaces}}.

Next, Let $Q^3$ be the quadric threefold.
The Spinor bundle $S$ is an object in the Kuznetsov component $\Ku(Q^3)$.
{\cite[Lemma A.1.]{preprint_song_2024_moduli_of_stable_sheaves_on_quadric_threefold}} claims that the Spinor bundle is $\sigma$-stable for any 
\[
\sigma\in\KC \coloneqq \Big\{\sigma\left(\alpha, -\frac{1}{2}\right)\cdot \widetilde{\GL}_{2}^{+}(\RB)\Big\}\subset \Stab(\Ku(Q^3)).
\]
See {\cite[Section 4.]{preprint_song_2024_moduli_of_stable_sheaves_on_quadric_threefold}} for more precise definitions.

\subsubsection{Bridgeland Stability of Ulrich Objects on Surfaces}\label{subsection: Bridgeland stability of Ulrich objects on surfaces}

\begin{lem}\label{lemma: Bridgeland stability function of Ulrich object}
    With the above settings, 
    \begin{align}
    Z_{s, t}(E) &= \bigg\{-r\chi(\OO_X) +\frac{rd}{4}(i_{X}^{2}+3i_X+4) + 
    \frac{rd}{2}\left(s^2-t^2+(i_X+3)s\right)\bigg\} \notag \\
    &\quad + \bigg\{\frac{rd}{2}(2s+i_X+3)t\bigg\}\sqrt{-1}.
    \end{align}
\end{lem}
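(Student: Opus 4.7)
The plan is to carry out the direct computation that plugs the Chern character of an Ulrich object into the defining integral of the Bridgeland central charge. Combining \cref{lemma: Chern polynomial of Ulrich objects} with \cref{proposition: Divisorial stability condition} in the case $D=sH$ (so that $Z_{s,t}=Z_{sH,tH}$), I would record $\ch_0(E)=r$, $\ch_1(E)=e_1 H$ with $e_1=\frac{r(i_X+3)}{2}$, and $\int_X\ch_2(E)=\gamma$, where $\gamma=-r\chi(\OO_X)+\frac{rd}{4}(i_X^2+3i_X+4)$ is the scalar in \cref{tag: explicit chern polynomial of E}.

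Because $\dim X=2$, the exponential truncates at second order:
\[
\exp(-(s+it)H)=1-(s+it)H+\tfrac{1}{2}(s+it)^2H^2.
\]
Multiplying by $\ch(E)$, retaining only the degree-two part, and integrating over $X$ using $\int_X H^2=d$ produces
\[
\int_X\exp(-(s+it)H)\,\ch(E)=\gamma-(s+it)\,e_1 d+\tfrac{rd}{2}(s+it)^2.
\]

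The remaining work is bookkeeping: apply the outer minus sign from the definition of $Z_{D,H}$, expand $(s+it)^2=(s^2-t^2)+2ist$ and $(s+it)=s+it$, substitute $e_1 d=\frac{rd(i_X+3)}{2}$ and the explicit value of $\gamma$, and separate real and imaginary parts. This should immediately yield the stated formula for $Z_{s,t}(E)$.

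There is no genuine obstacle in the argument; the only care required is to recognize that the third summand in \cref{lemma: Chern polynomial of Ulrich objects} is already the integrated scalar $\int_X\ch_2(E)$ rather than an unintegrated class, and to keep track of the real and imaginary parts of $(s+it)$ and $(s+it)^2$ against the real coefficients $r$, $e_1 d$, and $\gamma$.
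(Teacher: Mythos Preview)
Your proposal is correct and is exactly the approach the paper takes: the paper's proof consists of the single sentence ``We have the calculation using \cref{lemma: Chern polynomial of Ulrich objects}'', and you have simply spelled out that calculation in detail.
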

\begin{proof}[\proofname\ of \cref{lemma: Bridgeland stability function of Ulrich object}]
    We have the calculation using \cref{lemma: Chern polynomial of Ulrich objects}.
\end{proof}

\begin{prop}\label{proposition: Yoneda extension is not in the heart}
Let $X$ be a surface and
$E\in D^b(X)$ be an Ulrich object with respect to an embedding $\OO_{X}(1)$.

Assume that $E$ is contained in a tilting heart $\AC_{D, H}$ for some $(D, H)\in N^{1}(X)_{\RB}\times \Amp(X)_{\RB}$.
Then, $E$ is a shift of an Ulrich sheaf.
\end{prop}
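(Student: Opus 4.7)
The plan is to exploit three ingredients in combination: the cohomological restriction that the tilted heart $\AC_{D,H}$ imposes on $E$, the sheaf-theoretic structure of each $\HC^i(E)$ coming from \cref{Proposition: Ulrich object characterization}, and the fact that all Ulrich bundles on $(X,H)$ share a single $\mu_H$-slope together with the semistability of \cref{Theorem: Ulrih bundles are Gieseker stable}. First I would note that the definition of $\AC_{D,H}$ forces $\HC^i(E)=0$ outside $i\in\{-1,0\}$, with $\FC\coloneqq\HC^{-1}(E)\in\FC_{D,H}$ and $\TC\coloneqq\HC^0(E)\in\TC_{D,H}$; \cref{Proposition: Ulrich object characterization}(3) then identifies each nonzero $\FC,\TC$ as an Ulrich bundle, so the task reduces to showing that one of $\FC,\TC$ vanishes.

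Second, I would pin down the common $\mu_H$-slope of Ulrich bundles on $(X,H)$. From the Ulrich vanishings $\chi(\EC(-1))=\chi(\EC(-2))=0$ and Hirzebruch--Riemann--Roch on the surface, subtracting the two identities cancels $\ch_2(\EC)$ and leaves a linear relation
\[
c_1(\EC).H=\frac{\rk\EC}{2}\bigl(K_X.H+3H^2\bigr),
\]
so every Ulrich bundle has the same slope $\mu_U\coloneqq\frac{K_X.H+3H^2}{2H^2}$, independent of rank or higher Chern classes. In the Picard rank one setting this is essentially the content of \cref{lemma: Chern polynomial of Ulrich objects}.

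Finally, I would derive a numerical contradiction from the assumption $\FC\neq 0$ and $\TC\neq 0$. Since Ulrich bundles are $\mu_H$-semistable and torsion-free, taking the tautological subsheaf $F=\FC$ in the definition of $\FC_{D,H}$ yields $\mu_U\leq D.H$, while taking the tautological torsion-free quotient $F=\TC$ in the definition of $\TC_{D,H}$ yields the strict inequality $\mu_U>D.H$; these are incompatible, so exactly one of $\FC,\TC$ is nonzero and $E$ is therefore a shift of an Ulrich sheaf. The only step that requires any real computation is the slope-uniqueness calculation in step two, and this is where I would expect the main care to go (checking that the Riemann--Roch bookkeeping on a surface of arbitrary Picard rank really does produce a single $\mu_U$); everything else is a formal consequence of the torsion pair $(\FC_{D,H},\TC_{D,H})$ separating semistables of slopes on either side of $D.H$.
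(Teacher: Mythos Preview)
Your argument is essentially the paper's own: both identify $\HC^{-1}(E)$ and $\HC^0(E)$ as Ulrich bundles via \cref{Proposition: Ulrich object characterization}, observe they are semistable of a common slope, and read off a contradiction from the torsion pair $(\FC_{D,H},\TC_{D,H})$; you have simply made explicit the Riemann--Roch step that the paper compresses into the phrase ``the slopes of $\HC^i(E)$ coincide''. One caveat you and the paper both elide: the identity $c_1(\EC).H=\tfrac{\rk\EC}{2}(K_X.H+3H^2)$ follows from $\chi(\EC(-1))=\chi(\EC(-2))=0$ only when $H$ is the Ulrich polarization $c_1(\OO_X(1))$---for a genuinely different ample $H$ two Ulrich bundles need not share the same $\mu_H$-slope (e.g.\ $\OO(1,0)$ and $\OO(0,1)$ on $\PP{1}\times\PP{1}$ with $H=\OO(2,1)$), so the argument as written tacitly assumes the tilt is taken with respect to $\OO_X(1)$.
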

\begin{proof}[\proofname\ of \cref{proposition: Yoneda extension is not in the heart}]
Assume $\HC^i(E)\neq 0$ for $i=-1, 0$.
Note that $\HC^i(E)$ are both Ulrich bundles by \cref{Proposition: Ulrich object characterization} and 
semistable by \cref{Theorem: Ulrih bundles are Gieseker stable}.
As the slopes of $\HC^i(E)$ coincide, it contradict to the construction of $\AC_{D, H} = \langle\FC_{D, H}[1], \TC_{D, H}\rangle$.
\end{proof}

\begin{question}\label{question: Bridgeland stability of Ulrich objects}
    Let $X$ be a smooth surface over $\CB$. 
    Is there any geometric stability condition $\sigma = (Z, \AC)\in\Stab(X)$ 
    and an Ulrich object $E$ such that $E\in\AC$
    that is not $\sigma$-semistable?
\end{question}

\bibliographystyle{amsalpha}
\bibliography{bibtex_tyoshida}

\providecommand{\bysame}{\leavevmode\hbox to3em{\hrulefill}\thinspace}
\providecommand{\MR}{\relax\ifhmode\unskip\space\fi MR }
\providecommand{\MRhref}[2]{%
  \href{http://www.ams.org/mathscinet-getitem?mr=#1}{#2}
}
\providecommand{\href}[2]{#2}
\begin{thebibliography}{CMRPL21}

\bibitem[AM16]{arcara_miles_2016_bridgeland_stability_of_line_bundles_on_surfaces}
Daniele Arcara and Eric Miles, \emph{Bridgeland stability of line bundles on surfaces}, J. Pure Appl. Algebra \textbf{220} (2016), no.~4, 1655--1677. \MR{3423467}

\bibitem[Bea00]{beauville_2000_determinantal_hypersurfaces}
Arnaud Beauville, \emph{Determinantal hypersurfaces}, Michigan Math. J. \textbf{48} (2000), 39--64, Dedicated to William Fulton on the occasion of his 60th birthday. \MR{1786479}

\bibitem[Bea18]{beauville_2018_an_introduction_to_ulrich_bundles}
\bysame, \emph{An introduction to {U}lrich bundles}, Eur. J. Math. \textbf{4} (2018), no.~1, 26--36. \MR{3782216}

\bibitem[BFK12]{ballard_favero_katzarkov_2012_orlov_spectra_bounds_and_gaps}
Matthew Ballard, David Favero, and Ludmil Katzarkov, \emph{Orlov spectra: bounds and gaps}, Invent. Math. \textbf{189} (2012), no.~2, 359--430. \MR{2947547}

\bibitem[BHU87]{brennan_herzog_ulrich_1987_maximally_generated_cohenmacaulay_modules}
Joseph~P. Brennan, J\"urgen Herzog, and Bernd Ulrich, \emph{Maximally generated {C}ohen-{M}acaulay modules}, Math. Scand. \textbf{61} (1987), no.~2, 181--203. \MR{947472}

\bibitem[BO01]{bondal_orlov_2001_reconstruction_of_a_variety_from_the_derived_category_and_groups_of_autoequivalences}
Alexei Bondal and Dmitri Orlov, \emph{Reconstruction of a variety from the derived category and groups of autoequivalences}, Compositio Math. \textbf{125} (2001), no.~3, 327--344. \MR{1818984}

\bibitem[Bri07]{bridgeland_2007_stability_conditions_on_triangulated_categories}
Tom Bridgeland, \emph{Stability conditions on triangulated categories}, Ann. of Math. (2) \textbf{166} (2007), no.~2, 317--345. \MR{2373143}

\bibitem[Bri09]{bridgeland_2009_spaces_of_stability_conditions}
\bysame, \emph{Spaces of stability conditions}, Algebraic geometry---{S}eattle 2005. {P}art 1, Proc. Sympos. Pure Math., vol. 80, Part 1, Amer. Math. Soc., Providence, RI, 2009, pp.~1--21. \MR{2483930}

\bibitem[BvdB03]{bondal_van_2003_generators_and_representability_of_functors_in_commutative_and_noncommutative_geometry}
A.~Bondal and M.~van~den Bergh, \emph{Generators and representability of functors in commutative and noncommutative geometry}, Mosc. Math. J. \textbf{3} (2003), no.~1, 1--36, 258. \MR{1996800}

\bibitem[CHGS12]{casanellas_hartshorne_geiss_2012_stable_ulrich_bundles}
Marta Casanellas, Robin Hartshorne, Florian Geiss, and Frank-Olaf Schreyer, \emph{Stable {U}lrich bundles}, Internat. J. Math. \textbf{23} (2012), no.~8, 1250083, 50. \MR{2949221}

\bibitem[CMR15]{costa_miroroig_2015_glvinvariant_ulrich_bundles_on_grassmannians}
L.~Costa and R.~M. Mir\'o-Roig, \emph{{$GL(V)$}-invariant {U}lrich bundles on {G}rassmannians}, Math. Ann. \textbf{361} (2015), no.~1-2, 443--457. \MR{3302625}

\bibitem[CMRPL21]{book_costa_miroroig_2021_ulrich_bundlesfrom_commutative_algebra_to_algebraic_geometry}
Laura Costa, Rosa~Mar\'{\i}a Mir\'{o}-Roig, and Joan Pons-Llopis, \emph{Ulrich bundles---from commutative algebra to algebraic geometry}, De Gruyter Studies in Mathematics, vol.~77, De Gruyter, Berlin, [2021] \copyright 2021. \MR{4409698}

\bibitem[Cos17]{coskun_2017_a_survey_of_ulrich_bundles}
Emre Coskun, \emph{A survey of {U}lrich bundles}, Analytic and algebraic geometry, Hindustan Book Agency, New Delhi, 2017, pp.~85--106. \MR{3728127}

\bibitem[ES03]{eisenbud_schreyer_2003_resultants_and_chow_forms_via_exterior_syzygies}
David Eisenbud and Frank-Olaf Schreyer, \emph{Resultants and {C}how forms via exterior syzygies}, J. Amer. Math. Soc. \textbf{16} (2003), no.~3, 537--579, With an appendix by Jerzy Weyman. \MR{1969204}

\bibitem[ES09]{eisenbud_schreyer_2009_betti_numbers_of_graded_modules_and_cohomology_of_vector_bundles}
\bysame, \emph{Betti numbers of graded modules and cohomology of vector bundles}, J. Amer. Math. Soc. \textbf{22} (2009), no.~3, 859--888. \MR{2505303}

\bibitem[FHI22]{fukuoka_hara_ishikawa_2022_classification_of_rank_two_weak_fano_bundles_on_del_pezzo_threefolds_of_degree_four}
Takeru Fukuoka, Wahei Hara, and Daizo Ishikawa, \emph{Classification of rank two weak {F}ano bundles on del {P}ezzo threefolds of degree four}, Math. Z. \textbf{301} (2022), no.~3, 2883--2905. \MR{4437343}

\bibitem[FHI23]{fukuoka_hara_ishikawa_2023_rank_two_weak_fano_bundles_on_del_pezzo_threefolds_of_degree_five}
\bysame, \emph{Rank two weak {F}ano bundles on del {P}ezzo threefolds of degree five}, Internat. J. Math. \textbf{34} (2023), no.~13, Paper No. 2350084, 36. \MR{4663826}

\bibitem[FP23]{feyzbakhsh_pertusi_2023_serreinvariant_stability_conditions_and_ulrich_bundles_on_cubic_threefolds}
Soheyla Feyzbakhsh and Laura Pertusi, \emph{Serre-invariant stability conditions and {U}lrich bundles on cubic threefolds}, \'Epijournal G\'eom. Alg\'ebrique \textbf{7} (2023), Art. 1, 32. \MR{4545360}

\bibitem[HUB91]{herzog_ulrich_backelin_1991_linear_maximal_cohenmacaulay_modules_over_strict_complete_intersections}
J.~Herzog, B.~Ulrich, and J.~Backelin, \emph{Linear maximal {C}ohen-{M}acaulay modules over strict complete intersections}, J. Pure Appl. Algebra \textbf{71} (1991), no.~2-3, 187--202. \MR{1117634}

\bibitem[Huy06]{book_huybrechts_2006_fouriermukai_transforms_in_algebraic_geometry}
D.~Huybrechts, \emph{Fourier-{M}ukai transforms in algebraic geometry}, Oxford Mathematical Monographs, The Clarendon Press, Oxford University Press, Oxford, 2006. \MR{2244106}

\bibitem[IU05]{ishii_uehara_2005_autoequivalences_of_derived_categories_on_the_minimal_resolutions_of_ansingularities_on_surfaces}
Akira Ishii and Hokuto Uehara, \emph{Autoequivalences of derived categories on the minimal resolutions of {$A_n$}-singularities on surfaces}, J. Differential Geom. \textbf{71} (2005), no.~3, 385--435. \MR{2198807}

\bibitem[Kap88]{kapranov_1988_on_the_derived_categories_of_coherent_sheaves_on_some_homogeneous_spaces}
M.~M. Kapranov, \emph{On the derived categories of coherent sheaves on some homogeneous spaces}, Invent. Math. \textbf{92} (1988), no.~3, 479--508. \MR{939472}

\bibitem[Kaw02]{kawamata_2002_dequivalence_and_kequivalence}
Yujiro Kawamata, \emph{{$D$}-equivalence and {$K$}-equivalence}, J. Differential Geom. \textbf{61} (2002), no.~1, 147--171. \MR{1949787}

\bibitem[Ohn20]{ohno_2020_nef_vector_bundles_on_a_quadric_surface_with_the_first_chern_class_21}
Masahiro Ohno, \emph{Nef vector bundles on a quadric surface with the first {C}hern class {$(2,1)$}}, Adv. Geom. \textbf{20} (2020), no.~1, 109--116. \MR{4052950}

\bibitem[Ohn22]{ohno_2022_nef_vector_bundles_on_a_projective_space_or_a_hyperquadric_with_the_first_chern_class_small}
\bysame, \emph{Nef vector bundles on a projective space or a hyperquadric with the first {C}hern class small}, Rend. Circ. Mat. Palermo (2) \textbf{71} (2022), no.~2, 755--781. \MR{4453350}

\bibitem[Oor64]{oort_1964_yoneda_extensions_in_abelian_categories}
F.~Oort, \emph{Yoneda extensions in abelian categories}, Math. Ann. \textbf{153} (1964), 227--235. \MR{162836}

\bibitem[Orl09]{orlov_2009_remarks_on_generators_and_dimensions_of_triangulated_categories}
Dmitri Orlov, \emph{Remarks on generators and dimensions of triangulated categories}, Mosc. Math. J. \textbf{9} (2009), no.~1, 153--159, back matter. \MR{2567400}

\bibitem[Ott88]{ottaviani_1988_spinor_bundles_on_quadrics}
Giorgio Ottaviani, \emph{Spinor bundles on quadrics}, Trans. Amer. Math. Soc. \textbf{307} (1988), no.~1, 301--316. \MR{936818}

\bibitem[{Sta}18]{stacks-project}
The {Stacks Project Authors}, \emph{\textit{Stacks Project}}, \url{https://stacks.math.columbia.edu}, 2018.

\bibitem[Ulr84]{ulrich_1984_Gorenstein_rings_and_modules_with_high_numbers_of_generators}
Bernd Ulrich, \emph{Gorenstein rings and modules with high numbers of generators}, Math. Z. \textbf{188} (1984), no.~1, 23--32. \MR{767359}

\bibitem[Yan24]{preprint_song_2024_moduli_of_stable_sheaves_on_quadric_threefold}
Song Yang, \emph{Moduli of stable sheaves on quadric threefold}, arXiv: \href{https://arxiv.org/abs/2402.18098}{2402.18098}, 2024.

\end{thebibliography}
\end{document}